%% file: main.tex
\documentclass[11pt]{article}

\usepackage[T1]{fontenc}
\usepackage[utf8]{inputenc}
\usepackage{lmodern}
\usepackage{amsmath,amssymb,amsthm,mathtools}
\usepackage{enumitem}
\usepackage{booktabs,tabularx}
\usepackage{tikz}
\usepackage{float}
\usepackage{microtype}
\usepackage[a4paper,margin=1in]{geometry}
\usepackage[hidelinks]{hyperref}

\allowdisplaybreaks

\newtheorem{theorem}{Theorem}[section]
\newtheorem{lemma}[theorem]{Lemma}
\newtheorem{corollary}[theorem]{Corollary}
\newtheorem{proposition}[theorem]{Proposition}
\theoremstyle{definition}

\theoremstyle{remark}

\input{metadata}
\hypersetup{
  pdftitle={Grundy Total Domination and Skew Zero Forcing in Cartesian Products of Paths and Cycles},
  pdfauthor={\ManuscriptAuthorName}
}

\title{Grundy Total Domination and Skew Zero Forcing in\\
Cartesian Products of Paths and Cycles}
\author{\ManuscriptAuthorName\\[0.25em]
\small \ManuscriptAuthorDepartment, \ManuscriptAuthorInstitution\\
\small \ManuscriptAuthorLocation\\
\small \texttt{\ManuscriptAuthorEmail}}
\date{}

\begin{document}

\maketitle
\input{00_abstract}
\input{01_introduction}
\input{02_preliminaries}
\input{03_spectral_tools}
\input{04_path_path}
\input{05_path_cycle}
\input{06_cycle_cycle}
\input{07_conclusion}
\input{08_declarations}

\bibliographystyle{plain}
\bibliography{references}

\end{document}

%% file: metadata.tex
\newcommand{\ManuscriptAuthorName}{Fei-Huang Chang}
\newcommand{\ManuscriptAuthorDepartment}{Department of Applied Mathematics}
\newcommand{\ManuscriptAuthorInstitution}{National Dong Hwa University}
\newcommand{\ManuscriptAuthorLocation}{Hualien 974301, Taiwan}
\newcommand{\ManuscriptAuthorEmail}{cfh@gms.ndhu.edu.tw}

\newcommand{\ManuscriptFundingAgency}{National Science and Technology Council of Taiwan}
\newcommand{\ManuscriptGrantNumber}{NSTC 114-2115-M-259-005}

%% file: 00_abstract.tex
\begin{abstract}
We determine the Grundy total domination number and the skew zero forcing
number for every Cartesian product of two paths or cycles, thereby closing
the nonmatching bounds previously known for these rectangular,
cylindrical, and toroidal families.  For
$2\le a\le b$,
\[
 Z_-(P_a\square P_b)
 =a-\mathbf1_{\{a\ {\rm odd},\,b\ {\rm even}\}}.
\]
For $p\ge2$ and $c\ge3$,
\[
 Z_-(P_p\square C_c)=
 \begin{cases}
 \min\{p,c\},&c\text{ is odd},\\
 \min\{2p,c\},&c\text{ is even}.
 \end{cases}
\]
Finally, for $3\le a\le b$,
\[
 Z_-(C_a\square C_b)=
 \begin{cases}
 2a-1,&a=b\text{ odd},\\
 2a,&a=b\text{ even},\\
 \min\{b,2a\},&a<b,\ a\text{ odd},\\
 a,&a<b,\ a\text{ even},\ b\text{ odd},\\
 2a,&a<b,\ a,b\text{ even}.
 \end{cases}
\]
In every case,
$\gamma_{\mathrm{gr}}^t(G)=|V(G)|-Z_-(G)$.  The path-containing lower
bounds use Kronecker differences of skew-symmetric or hollow symmetric
factor matrices, and they also determine maximum skew nullity and minimum
skew rank.  They further produce an infinite family for which maximum
skew nullity is strictly smaller than skew zero forcing.  The new
cycle--cycle lower bounds use a cyclic column-defect estimate, applied to
complete columns when the shorter cycle is odd and, after a one-sided
bipartite reduction, to half-columns when both cycles are even.  Explicit
forcing constructions give matching upper bounds throughout.
\end{abstract}

\noindent\textbf{Keywords:} Grundy total domination; skew zero forcing;
Cartesian product; paths; cycles; minimum skew rank.

\medskip
\noindent\textbf{2020 Mathematics Subject Classification:}
05C69, 05C50, 05C76.

%% file: 01_introduction.tex
\section{Introduction}\label{sec:introduction}

Let $G$ be a finite simple graph without isolated vertices.  A sequence
$S=(v_1,\ldots,v_k)$ of distinct vertices of $G$ is a legal
open-neighborhood sequence if
\[
N_G(v_i)\setminus\bigcup_{j=1}^{i-1}N_G(v_j)\ne\varnothing
\qquad\text{for every }1\le i\le k.
\]
A maximum-length legal open-neighborhood sequence is called a Grundy
total dominating sequence, and its length is the Grundy total domination
number $\gamma_{\mathrm{gr}}^t(G)$, introduced by
Bre\v{s}ar, Henning, and Rall~\cite{BresarHenningRallTotalSequences}.
Because $G$ has no isolated vertices, every maximum-length legal sequence
satisfies $\bigcup_iN_G(v_i)=V(G)$: otherwise a neighbor of an uncovered
vertex could be appended.
This parameter measures the largest number of legal choices in a greedy
total domination process.  We study it on Cartesian products of paths and
cycles, which form rectangular grids, cylindrical grids, and toroidal
grids.  Neighborhood-sequence variants and their zero-forcing connections
were developed further in~\cite{BresarEtAlGrundyZeroForcing}, while the
closed-neighborhood analogue was studied on grid-like and toroidal graphs
in~\cite{BresarEtAlGridToroidal}.  For Grundy total domination, the three
Cartesian-product families considered here were treated previously by
Bre\v{s}ar et al.~\cite{BresarEtAlProductGraphs}.

Our proofs use the skew zero forcing number.  Under the skew color-change
rule, a vertex, whether blue or white, may force its unique white neighbor
to become blue.  The minimum cardinality of an initial blue set from which
all vertices can be colored blue is denoted by $Z_-(G)$.  Lin proved that
\[
\gamma_{\mathrm{gr}}^t(G)=|V(G)|-Z_-(G)
\]
for every finite simple graph~\cite[Theorem~2.2]{LinZeroForcingGrundy}.
Thus, determining either parameter determines the other.  We formulate
the problem in terms of $\gamma_{\mathrm{gr}}^t$, organize the proofs
through $Z_-$, and state every main result for both parameters.  For
background on the inverse-matrix problems behind zero forcing and its
variants, see~\cite{HogbenLinShaderBook}.

For $3\le a\le b$ and $p,c\ge3$, the Cartesian-product results of
Bre\v{s}ar et al.~\cite[Theorems~5.1 and~5.2]{BresarEtAlProductGraphs}
give
\begin{align*}
ab-a
&\le \gamma_{\mathrm{gr}}^t(P_a\square P_b)
\le ab-\left\lfloor\frac a2\right\rfloor,\\
pc-\min\{2p,c\}
&\le \gamma_{\mathrm{gr}}^t(P_p\square C_c)
\le pc-\min\left\{p,\left\lceil\frac c2\right\rceil\right\},\\
ab-2a
&\le \gamma_{\mathrm{gr}}^t(C_a\square C_b)
\le ab-a.
\end{align*}
The same paper records the known square-path formula
$\gamma_{\mathrm{gr}}^t(P_s\square P_s)=s^2-s$ and, for odd $s$,
derives $\gamma_{\mathrm{gr}}^t(C_s\square C_s)=(s-1)^2$.
The displayed intervals leave the exact values undetermined for general
rectangular grids, cylindrical grids, and most toroidal grids.

Closing these intervals requires more than sharpening one common bound.
The spectra of skew-symmetric matrices impose parity restrictions, cyclic
forcing constructions must close across a seam, and the toroidal lower
bounds depend on how a legal sequence first completes a layer.  These
obstructions occur in different parity regimes and lead naturally to the
combination of linear-algebraic certificates, explicit forcing dynamics,
and direct footprint counting used below.

The matrix bounds associated with skew zero forcing provide the lower
bounds on $Z_-$ used in this paper.  Let $M_-(G)$ be the maximum nullity
over the real skew-symmetric matrices described by $G$, and let $M_0(G)$
be the corresponding maximum over real hollow symmetric matrices.  The
minimum skew rank is
$\operatorname{mr}^-(G)=|V(G)|-M_-(G)$.  The known inequalities
\[
M_-(G)\le Z_-(G)
\qquad\text{and}\qquad
M_0(G)\le Z_-(G)
\]
follow from skew minimum-rank theory and Lin's correspondence
\cite{IMAISUMinimumSkewRank,LinZeroForcingGrundy}.  If $A$ and $B$ are
matrices of orders $m$ and $n$, respectively, that are diagonalizable over
$\mathbb C$, then the Kronecker difference
\[
K=A\otimes I_n-I_m\otimes B
\]
has nullity
\[
\operatorname{nullity}K
=\sum_{\lambda\in\operatorname{supp}\sigma(A)\cap
\operatorname{supp}\sigma(B)}
  m_A(\lambda)m_B(\lambda),
\]
where $\operatorname{supp}\sigma(A)$ is the set of distinct eigenvalues
of $A$, and $m_A(\lambda)$ and $m_B(\lambda)$ denote algebraic
multiplicities.
Suitable factor patterns make $K$ describe a Cartesian product.  This
product-matrix mechanism has symmetric and skew-spectral precedents
\cite{BensonEtAlGraphProducts,CuiHouSkewCartesian}; our task is to
construct factor matrices whose common spectra have the multiplicities
required by the forcing bounds.  The inequalities $M_-(G)\le Z_-(G)$ and
$M_0(G)\le Z_-(G)$ need not be equalities in general; below we identify
both equality families and one infinite family with $M_-(G)<Z_-(G)$.
That strict gap is also concrete evidence that factor-matrix certificates
cannot supply all the lower bounds in the classification.  Conversely,
the later column-defect method exploits periodic layers and does not
determine the associated matrix-nullity parameters.  The two approaches
are therefore complementary rather than interchangeable.

The path--path result determines both parameters for every Cartesian
product of two nontrivial paths.
\begin{theorem}\label{thm:path-product}
Let $2\le a\le b$.  Then
\[
Z_-(P_a\square P_b)
=a-\mathbf 1_{\{a\text{ is odd and }b\text{ is even}\}},
\]
and consequently
\[
\gamma_{\mathrm{gr}}^t(P_a\square P_b)
=ab-a+\mathbf 1_{\{a\text{ is odd and }b\text{ is even}\}}.
\]
\end{theorem}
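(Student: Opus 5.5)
By Lin's identity $\gamma_{\mathrm{gr}}^t(G)=|V(G)|-Z_-(G)$, it suffices to prove the formula for $Z_-(P_a\square P_b)$. We prove matching upper and lower bounds. The upper bound comes from explicit skew forcing. The lower bound comes from the Kronecker-difference nullity formula together with $M_-(G)\le Z_-(G)$ and $M_0(G)\le Z_-(G)$.

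\emph{Upper bound.} Index the vertices as $(i,j)$ with $1\le i\le a$ (rows) and $1\le j\le b$ (columns).

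Start with the whole column $j=1$ blue. Each $(i,1)$ has the single white neighbor $(i,2)$, so column $2$ turns blue. Now each $(i,2)$ has the single white neighbor $(i,3)$, and so on. Inductively every column turns blue, giving $Z_-\le a$.

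When $a$ is odd and $b$ is even, we want a set of size $a-1$. Since $b$ is even, $P_b$ has a perfect matching, and in a path a white vertex may force its unique white neighbor. The plan is to run the same column sweep but place the blue vertices in a staggered pattern, for instance $(2k,1)$ and $(2k-1,2)$-type positions, whose white-vertex forces exploit the even length in the $b$-direction. I would find the pattern by hand on $P_3\square P_4$ and then extend it by induction on $a$ in steps of $2$.

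An alternative is to use the known facts $Z_-(P_n)=0$ for $n$ even and $Z_-(P_n)=1$ for $n$ odd. In $P_a\square P_b$ with $b$ even, each row $P_b$ is a path that forces itself from nothing. One would choose an initial set of $a-1$ vertices so that rows can be cleared one at a time, with each cleared row leaving exactly one white neighbor in the next row. I expect the general construction to need a careful boundary case at the last row.

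\emph{Lower bound.} Take $A$ to be a matrix with the pattern of $P_a$ and $B$ one with the pattern of $P_b$. Then $K=A\otimes I_b - I_a\otimes B$ has the pattern of $P_a\square P_b$, since the diagonals cancel in the hollow or skew setting. We need $\operatorname{nullity}K\ge a-\mathbf 1$.

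\emph{Hollow symmetric case.} Take $A$ hollow symmetric with $a$ distinct eigenvalues; any Jacobi matrix with zero diagonal and nonzero off-diagonal entries works. We then need $B$, a $b\times b$ hollow Jacobi matrix, whose spectrum contains all $a$ eigenvalues of $A$. Both spectra are symmetric about $0$. By the inverse eigenvalue theorem for Jacobi matrices, any $b$ distinct reals symmetric about $0$ are realized by a zero-diagonal Jacobi matrix. For $b$ odd the spectrum must contain $0$; for $b$ even it must avoid $0$.

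\emph{Case analysis.} If $a$ is even, choose $B$'s spectrum to contain $\sigma(A)$, which is symmetric and avoids $0$. This is possible both for $b$ even and for $b$ odd, the latter by adding $0$ and further symmetric pairs. If $a$ and $b$ are both odd, both spectra contain $0$, so inclusion is again possible. In each case the nullity is $a$, giving $M_0\ge a$.

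If $a$ is odd and $b$ is even, $0\in\sigma(A)$ but $0\notin\sigma(B)$, so at most $a-1$ eigenvalues can be shared. This gives nullity $a-1$, and hence $M_0\ge a-1$, matching the upper bound.

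Real skew-symmetric tridiagonal matrices, with spectra $i\cdot(\text{real symmetric set})$, give the same counts for $M_-$. This is useful for the skew-rank remarks but not needed here.

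\emph{Main obstacle.} I expect the main obstacle to be the upper-bound construction with $a-1$ vertices in the case $a$ odd, $b$ even. The spectral side is routine once the inverse Jacobi eigenvalue theorem is invoked.
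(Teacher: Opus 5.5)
\paragraph{Assessment.} Your lower bound is sound. You use hollow symmetric Jacobi matrices and $M_0\le Z_-$ where the paper uses skew-symmetric path matrices and $M_-\le Z_-$, and both give the same common-eigenvalue count $a-\mathbf 1_{\{a\text{ odd},\,b\text{ even}\}}$. The paper prefers the skew version only because it also yields $M_-$ and $\operatorname{mr}^-$. Your one-column sweep for $Z_-\le a$ matches the paper. The genuine gap is the upper bound $Z_-(P_a\square P_b)\le a-1$ when $a$ is odd and $b$ is even. You never prove it: you give a plan to find a pattern on $P_3\square P_4$ and extend it by induction, plus a row-clearing alternative with no construction. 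That bound is the whole content of the one-unit improvement, so the theorem is not established in the exceptional case.

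\paragraph{Why your candidate fails.} The pattern you suggest cannot work. Put $X=\{(i,j):i+j\text{ odd}\}$ and $Y=\{(i,j):i+j\text{ even}\}$. The positions $(2k,1)$ and $(2k-1,2)$ all lie in $X$. In a bipartite graph every force goes between the two classes. Whether a vertex $u$ can force depends only on the colors of $N(u)$, which lies in the opposite class. Hence $X$ and $Y$ are colored by two independent processes. If no vertex of $Y$ is initially blue, the first force into $Y$ would need an $X$-vertex of degree $1$. But $P_a\square P_b$ has minimum degree $2$, so your set never colors $Y$. Taken over all admissible $k$, it also has $(a-1)/2+(a+1)/2=a$ elements, so it would not beat the trivial bound anyway.

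\paragraph{The missing idea.} Exploit this decoupling by seeding each class with its smaller parity half of a boundary column, using opposite ends. The paper takes
\[
B=\{v_{i,1}:i\text{ even}\}\cup\{v_{i,b}:i\text{ even}\}.
\]
The left half is exactly $X$ in column $1$. The vertices $v_{i,j}$ with $i\equiv j\pmod 2$ lie in $Y$, and they may be white. Each such vertex has its vertical and left neighbors in $X$, already blue, so its right neighbor is its unique white neighbor. This sweeps all of $X$ rightward, column by column. Because $b$ is even, the right half of $B$ is exactly $Y$ in column $b$. The $X$-vertices then sweep $Y$ leftward in the same way. Because $a$ is odd, each half has $(a-1)/2$ vertices, giving $|B|=a-1$.
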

A boundary column gives the forcing upper bound $a$ in every case.  In the
odd--even branch, a two-sided checkerboard sweep saves one initial blue
vertex.  For the reverse inequality, we construct skew-symmetric path
matrices whose Kronecker difference has the required nullity.  The one-unit
exception reflects the parity restriction on the spectra of skew-symmetric
path matrices.  The same certificates also determine
$M_-(P_a\square P_b)$ and hence the minimum skew rank of every path product.

We next determine both parameters for every product of a path and a cycle.
\begin{theorem}\label{thm:pc-exact-skew-zero-forcing}
Let $p\ge2$ and $c\ge3$, and define
\[
q(p,c)=
\begin{cases}
\min\{p,c\},&c\text{ is odd},\\
\min\{2p,c\},&c\text{ is even}.
\end{cases}
\]
Then
\[
Z_-(P_p\square C_c)=q(p,c)
\]
and
\[
\gamma_{\mathrm{gr}}^t(P_p\square C_c)=pc-q(p,c).
\]
\end{theorem}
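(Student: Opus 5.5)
The plan is to prove $Z_-(P_p\square C_c)=q(p,c)$ through two matching inequalities: forcing constructions for the upper bound and Kronecker-difference nullity certificates for the lower bound. The formula for $\gamma_{\mathrm{gr}}^t$ then follows from Lin's identity $\gamma_{\mathrm{gr}}^t(G)=|V(G)|-Z_-(G)$. Throughout, vertices are indexed as $(i,j)$ with $i\in\{1,\dots,p\}$ along the path and $j\in\mathbb Z_c$ along the cycle.

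\emph{Upper bounds.} We use two constructions and take the better one in each parity regime.

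First, coloring a full copy of $C_c$ at the path end (row $i=1$) blue gives the bound $c$. Each vertex $(i,j)$ of a completed row has a unique white neighbor, namely $(i+1,j)$, so the blue region advances row by row.

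Second, when $c$ is odd we show that one path-fiber column $\{(i,0)\}$ of size $p$ suffices. The mechanism is the one behind Theorem~\ref{thm:path-product}: a column of $P_p$ forces its neighboring column by a skew forcing sweep. On $P_p\square C_c$ the sweep can travel both ways around the cycle. The work is to check that the two fronts meet consistently at the seam, and the odd length of the cycle is what makes the final column close. When $c$ is even this closing fails, and instead we color two adjacent columns, $2p$ vertices in total. These two columns act like a boundary of the path product $P_p\square P_{c-2}$, and the sweep then proceeds in the same way.

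We also need to handle the cases where $\min$ picks $c$, such as $p>c$ with $c$ odd. These are already covered by the row construction.

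\emph{Lower bounds.} We use $M_-(G)\le Z_-(G)$ and $M_0(G)\le Z_-(G)$. The aim is to find factor matrices $A$, supported on $P_p$ and either skew-symmetric or hollow symmetric, and $B$, supported on $C_c$ and of the same type, such that
\[
K=A\otimes I_c-I_p\otimes B
\]
has the pattern of $P_p\square C_c$ and nullity $q(p,c)$. The nullity is read off from the formula for Kronecker differences.

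For $c$ odd, a skew-symmetric $B$ on $C_c$ has eigenvalues $0,\pm i\mu_1,\dots$. We choose $B$ with $c$ distinct eigenvalues, using weighted cycles whose spectra can be prescribed. We then choose $A$ on $P_p$ with distinct eigenvalues that match $\min\{p,c\}$ of them. This needs a realization result: a Jacobi-type inverse eigenvalue theorem for skew-symmetric path matrices, with a kernel of dimension at most one when $p$ is odd. Where the skew parity constraint blocks a match, for example an unmatched $0$, we switch to hollow symmetric matrices and the $M_0$ bound.

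For $c$ even, the target is $\min\{2p,c\}$. Each nonzero eigenvalue of $A$ must therefore meet an eigenvalue of $B$ of multiplicity $2$. We design $B$ on $C_c$ so that its spectrum consists of $c/2$ eigenvalue pairs, each of multiplicity two. Weighted even cycles whose weights have the right gauge allow this, since they carry a circulant-like degeneracy. Then every one of the $p$ distinct eigenvalues of $A$ contributes $2$, giving nullity $2\min\{p,c/2\}=\min\{2p,c\}$.

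\emph{Main obstacle.} The hardest step is the spectral realization on $C_c$: making skew-symmetric or hollow symmetric matrices on a cycle have prescribed eigenvalues with prescribed multiplicities, with full support, while the path factor hits those eigenvalues exactly and all off-diagonal entries stay nonzero. To do this, I would first try circulant weightings $B=\sum_j w(E_{j,j+1}\mp E_{j+1,j})$, whose eigenvalues $w(\omega^k\mp\omega^{-k})$ come in conjugate pairs with multiplicity $2$ exactly when $c$ is even. Then I would scale a path matrix, whose spectrum can be fixed by the inverse Jacobi theorem, to match a chosen subset of these eigenvalues. The odd-cycle case, where only multiplicity one is available from the circulant, needs a perturbed non-circulant $B$ with distinct eigenvalues. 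Ensuring the parity of zero eigenvalues agrees there is the delicate point.
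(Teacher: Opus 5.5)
\textbf{Genuine gap: the upper bound in the branch $c$ odd, $p<c$.} This is the only branch where neither the row construction nor the two-column construction reaches $q(p,c)=p$, and your argument for it fails. A single $P_p$-fiber $\{(i,0):1\le i\le p\}$ is not a skew zero forcing set of $P_p\square C_c$, because no force is available at the start:
\begin{itemize}
\item each $(i,0)$ has the two distinct white neighbors $(i,1)$ and $(i,-1)$;
\item each $(i,\pm1)$ has the white neighbor $(i,\pm2)$ and, since $p\ge2$, at least one white neighbor in its own fiber;
\item every other vertex has only white neighbors.
\end{itemize}
Already for $P_2\square C_3$ the process is stalled from the first step. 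The analogy with the path product breaks exactly here. A boundary column of $P_a\square P_b$ has only one neighboring column, whereas a fiber of the cylinder has two. The oddness of $c$ cannot help a process that never starts. As written, your constructions therefore give only $Z_-\le\min\{c,2p\}$, which exceeds $p$ for every odd $c>p$ (e.g.\ $P_3\square C_5$).

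\textbf{The missing idea.} Place the $p$ blue vertices sparsely in a boundary \emph{cycle} layer: $B_{p,c}=\{v_{0,2r}:0\le r\le p-1\}$. This set has $p$ distinct elements because $2$ is invertible modulo odd $c$.
\begin{itemize}
\item The vertices $v_{0,2r+1}$, $0\le r\le p-2$, have both cycle neighbors in $B_{p,c}$, so they force downward. Iterating gives a triangular front $D_t$ in rows $t\le p-1$.
\item From the ends of this front, two waves $E_t$ in alternating-parity rows travel in both directions around the cycle.
\item One checks that the waves close at the seam into two adjacent blue path layers. This needs separate cases $p\le m$ and $p>m$ (with $c=2m+1$), plus the case $(p,c)=(2,3)$.
\item Two-layer propagation then finishes.
\end{itemize}
This seam verification is the real content of the upper bound, and it is absent from your proposal.

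\textbf{Lower bound: right strategy, three corrections.} Your lower-bound strategy is the paper's, but three details need fixing.
\begin{itemize}
\item \emph{Even $c$.} The untwisted skew circulant you propose to try first does not have all eigenvalues double. When $4\mid c$, the values $\pm2i$ are simple, and the double zero is useless for even $p$. For even $p$ it can fall short; for $P_4\square C_6$ it gives nullity $4<6$. The paper uses $S_c^0$ when $p$ is odd and the twisted cycle $S_c^1$ (wrap-around sign reversed, no zero eigenvalue, all values double when $4\mid c$) when $p$ is even.
\item \emph{Odd $c$.} No perturbation is needed, since the circulant spectrum is already simple.
\item \emph{Branch $c$ odd, $p$ even, $p\ge c$.} You should state why skew certificates are blocked: the order $pc$ is even, so skew nullity is even, while the target $c$ is odd. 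The hollow certificate uses $A(C_c)$. Its set of distinct eigenvalues $T_c$ satisfies $0\notin T_c$ and $T_c\cap(-T_c)=\varnothing$. Hence a hollow path matrix of order $p\ge c+1$ whose simple spectrum contains $T_c\cup(-T_c)$ gives nullity exactly $c$.
\end{itemize}
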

The elementary upper bounds $c$ and $2p$ follow by sweeping from one
boundary cycle layer and from two adjacent path layers, respectively.
When $c$ is odd and $p<c$, our smaller forcing set begins with a step-two
pattern in one boundary cycle layer and closes across the cyclic seam.
The matrix lower bound is skew-symmetric except when $p$ is even, $c$ is
odd, and $p\ge c$.  In that branch, the nullity parity of skew-symmetric
matrices requires a hollow symmetric certificate.  More precisely,
\[
M_-(P_p\square C_c)=c-1<c=Z_-(P_p\square C_c)=M_0(P_p\square C_c),
\]
whereas $M_-=Z_-$ in every other path--cycle branch.  Thus the minimum
skew rank is determined throughout the family.

The cycle--cycle family also admits a complete classification.
\begin{theorem}\label{thm:cc-exact}
Let $3\le a\le b$, and define
\[
\rho(a,b)=
\begin{cases}
2a-1,&a=b\text{ and }a\text{ is odd},\\
2a,&a=b\text{ and }a\text{ is even},\\
\min\{b,2a\},&a<b\text{ and }a\text{ is odd},\\
a,&a<b,\ a\text{ is even, and }b\text{ is odd},\\
2a,&a<b\text{ and }a,b\text{ are even}.
\end{cases}
\]
Then
\[
Z_-(C_a\square C_b)=\rho(a,b)
\qquad\text{and}\qquad
\gamma_{\mathrm{gr}}^t(C_a\square C_b)=ab-\rho(a,b).
\]
\end{theorem}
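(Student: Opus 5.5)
By Lin's identity $\gamma_{\mathrm{gr}}^t(G)=|V(G)|-Z_-(G)$, it suffices to prove $Z_-(C_a\square C_b)=\rho(a,b)$. I would prove this as a pair of inequalities in each of the five branches: an explicit forcing set of size $\rho(a,b)$ for the upper bound, and a lower-bound certificate. Throughout, I write vertices as $(i,j)$ with $i\in\mathbb Z_a$ (the short cycle, so each column is a copy of $C_a$) and $j\in\mathbb Z_b$ (the columns).

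\textbf{Upper bounds.} Two blue columns $j=0,1$ always force: each vertex $(i,1)$ has all neighbors blue except $(i,2)$, so it forces $(i,2)$, and the sweep continues around the torus. This gives $Z_-\le 2a$, and by symmetry $Z_-\le 2b$. For the $\min\{b,2a\}$ branch with $a$ odd, I would transpose and use a single blue row (a copy of $C_b$) together with a seam-closing pattern. Here I would adapt the step-two construction from the $P_p\square C_c$ proof with odd cycle length, which shows $Z_-(P_p\square C_c)\le p$ when $c$ is odd. For $a$ even and $b$ odd, I would similarly seek a forcing set of size $a$ by adapting the path--cycle step-two seam construction, now around the odd direction $b$.

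The square case $a=b$ odd, with value $2a-1$, needs a separate construction. I would take two adjacent columns minus one vertex and check that a diagonal sweep recovers the missing vertex at the seam. This is consistent with the known value $(s-1)^2=s^2-(2s-1)$.

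\textbf{Lower bounds.} Where possible I would use matrix certificates. For $K=A\otimes I-I\otimes B$, with $A,B$ skew-symmetric or hollow symmetric cycle matrices having many common eigenvalues, the inequality $M_\pm\le Z_-$ gives the bound. A cycle $C_n$ carries a hollow symmetric matrix with eigenvalues $2\cos(2\pi k/n)$, and these come in pairs. With the weightings available on cycles (signed or complex-like patterns), $C_a$ and $C_b$ should share roughly $a$ common eigenvalues with multiplicity. This should give the value $a$ in the even/odd branch. The abstract, however, indicates that the new bounds instead come from a cyclic column-defect estimate, so I would prove the following. In any forcing process on the torus, consider the first time some column becomes completely blue. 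Before that moment, each column is missing at least one vertex. A force from $(i,j)$ into column $j\pm1$ requires the other three neighbors to be blue. Counting the forces that must cross between columns, I would show that each column ``defect'' costs initial blue vertices. The target inequality is $|S|\ge\min\{b,2a\}$ when $a$ is odd, with the bound $b$ arising when every column needs its own seed and $2a$ when two full columns must effectively be seeded.

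When $a$ and $b$ are both even, the torus is bipartite and the skew forcing rule only acts between color classes. Forcing the even-parity vertices depends only on blue vertices in the odd class, and vice versa. I would therefore split $Z_-$ into two one-sided problems. Each is a forcing problem on ``half-columns'' of size $a/2$, and applying the column-defect estimate to each should give $a$ per side, hence $2a$ in total. For $a=b$ even the same argument gives $2a$. For $a=b$ odd, the defect estimate should give $2a-1$, losing one unit because the symmetric diagonal sweep allows one saving.

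\textbf{Main obstacle.} I expect the hard step to be the column-defect lemma, in particular proving the $\min\{b,2a\}$ bound when $a$ is odd. The argument must show that, until a column fills, forcing cannot propagate cheaply around the cycle; the odd parity of $a$ should prevent a checkerboard pattern within a column from closing up. I would first try to prove this by induction on the number of forces: track the set of columns containing a white vertex, and show that each column must receive either its own initial seed or two seeds from the columns on either side.
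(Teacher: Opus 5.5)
\textbf{Gap 1: the central lower bounds are not proved.} For odd $a<b$ you need $Z_-(C_a\square C_b)\ge\min\{b,2a\}$, and for even $a,b$ you need $2a$. Your plan offers only the slogan that each column needs its own seed or two seeds from its neighbours, with no inequality behind it. At your stopping time (the first fully blue column of the forcing process) you also identify no monotone quantity.

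The argument that works lives on the Grundy side. Along a legal sequence the defect $D_r=|N(S_r)|-|S_r|$ is nondecreasing and ends at $n-k$. So it suffices to bound $D_m$ at the first prefix $S_m$ of the sequence that contains an entire column. Let $t_j$ count unselected and $u_j$ count undominated vertices in column $j$. Then $t_0=0$, $1\le t_j\le a$, and $u_j\le\min\{t_{j-1},t_{j+1},t_j-\mathbf 1_{\{0<t_j<a\}}\}$. The last term holds because an undominated $(i,j)$ needs both $(i-1,j)$ and $(i+1,j)$ unselected, and $i\mapsto i+2$ is a single cycle on $\mathbb Z_a$ when $a$ is odd. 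A cyclic height-charge count then gives $D_m=\sum_j(t_j-u_j)\ge\min\{b-1,2a\}$. This is one unit short when $b\le 2a$. The last unit comes from the equality case: equality forces $t_1=t_{b-1}=1$, whereas a vertex footprinted by the column-completing term must lie in column $1$ or $b-1$ with both of its neighbours in that column unselected.

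For even tori, your bipartite split is the right reduction; it is Lin's one-sided identity. Note, however, that the closure of a class depends only on the initial blue vertices in that same class, since the forcer's colour is irrelevant; you stated it the other way round. The estimate must then be rerun on half-columns with $q=a/2$. This includes the degenerate case $q=2$ and a separate exclusion of defect $a-1$ when $b=a$.

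For $a=b$ odd, the defect method cannot reach $2a-1$: the charge bound there is only $a-1$. That case has to come from the known value $\gamma_{\mathrm{gr}}^t(C_a\square C_a)=(a-1)^2$ of Bre\v{s}ar et al., which supplies both bounds. Likewise the lower bound $a$ for $a$ even, $b$ odd is already known, so no speculative matrix certificate is needed.

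\textbf{Gap 2: the upper-bound constructions.} Your plan for $\min\{b,2a\}$ fails as stated. A full blue $C_b$-row already uses the entire budget $b$, and on the torus it is stuck. Each of its vertices has two white neighbours in its column, and every white vertex has two white neighbours in its row. So there is no room for an additional seam-closing pattern. Nor does the step-two construction for $P_p\square C_c$ transfer: it starts from a boundary cycle layer whose vertices have only one path-direction neighbour, and the torus has no boundary.

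What is needed is one initial vertex per column on a non-constant closed height profile.
\begin{itemize}
\item For $b$ odd, write $b=a+2s$ and take the profile with cyclic word $D^{a+s}U^s$. Each local switch $DU\to UD$ is realized by a legal force, and one round of switches lowers every height by $2$. Because $a$ is odd, the translates $B-2k$ with $0\le k<a$ exhaust the vertex set.
\item For $b$ even, take $X_0\cup X_1$ with $X_t=\{(\overline t,j):t+j\equiv0\pmod 2\}$ and row indices mod $a$. These half-layers propagate twice around the odd direction. Since an odd shift swaps column parity, the $2a$ half-layers partition the vertex set.
\end{itemize}
The same half-layer construction with the factors interchanged gives a forcing set of size $a$ when $a$ is even and $b$ is odd. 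Your step-two seam adaptation leaves that construction unspecified.
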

The odd-square case was known
\cite{BresarEtAlProductGraphs}.  The new lower bounds in the previously
unresolved odd-shorter and even-even branches use one common column-defect
estimate.  For a legal prefix $S$, the quantity
$|N(S)|-|S|$ is the cumulative footprint excess.  Writing the numbers of
unselected and undominated vertices column by column turns this defect
into a cyclic height charge bounded below by $\min\{b-1,2q\}$.  This is a
columnwise refinement of the arc-counting method of
Bre\v{s}ar et al.~\cite[Theorem~5.1]{BresarEtAlProductGraphs}.  When the
shorter cycle has odd order, take $q=a$ and examine the first completed
column; the equality case supplies the final unit needed for the bound
$\min\{b,2a\}$.  When both cycle orders are even, Lin's one-sided identity
reduces the problem to legal sequences supported on one bipartition class;
the same estimate, with $q=a/2$, gives a loss of at least $a$ in that
class and hence $Z_-\ge2a$.  Closed-diagonal and
alternating half-layer forcing sets provide the matching upper bounds.
If the shorter order is even and the longer one is odd, the half-layer
construction meets the known lower bound $a$.

The paper is organized as follows.  Section~\ref{sec:preliminaries}
introduces skew zero forcing and the relevant matrix parameters.
Section~\ref{sec:spectral-tools} develops the product-matrix tools and the
factor spectra used later.  Sections~\ref{sec:path-path} and
\ref{sec:path-cycle} prove Theorems~\ref{thm:path-product} and
\ref{thm:pc-exact-skew-zero-forcing}, respectively.
Section~\ref{sec:cycle-cycle} proves Theorem~\ref{thm:cc-exact} by the
cyclic column-defect method.
Section~\ref{sec:conclusion} summarizes the classifications and their
proof tools.

%% file: 02_preliminaries.tex
\section{Preliminaries}\label{sec:preliminaries}

All graphs in this paper are finite and simple.  For a graph $G$ and a
vertex $v$, let $N_G(v)$ denote the open neighborhood of $v$.  Unless a
statement explicitly says otherwise, graphs have no isolated vertices.
The path and cycle on $n$ vertices are denoted by $P_n$ and $C_n$,
respectively; $I_n$ denotes the identity matrix of order $n$, and
$\mathbf 1_{\mathcal P}$ is $1$ when the property $\mathcal P$ holds and
$0$ otherwise.
The Cartesian product $G\square H$ has vertex set $V(G)\times V(H)$, with
$(u,x)$ adjacent to $(v,y)$ if and only if either $u=v$ and $xy\in E(H)$,
or $x=y$ and $uv\in E(G)$.

In the skew zero forcing process, the vertices are colored blue or white.
If a vertex $u$, of either color, has exactly one white neighbor $v$, then
$u$ may force $v$; we write $u\to v$.  A set $B\subseteq V(G)$ is a
\emph{skew zero forcing set} if, starting with exactly the vertices of $B$
blue, repeated legal forces can color all vertices blue.  The minimum
cardinality of such a set is the \emph{skew zero forcing number} $Z_-(G)$.
Lin's correspondence gives
\begin{equation}\label{eq:grundy-skew-identity}
\gamma_{\mathrm{gr}}^t(G)=|V(G)|-Z_-(G).
\end{equation}

Let $G$ have vertex set $\{1,\ldots,n\}$.  We write
\begin{align*}
\mathcal S_-(G)
&=\{A=[a_{ij}]\in\mathbb R^{n\times n}:A^T=-A,
\ a_{ij}\ne0\Longleftrightarrow ij\in E(G)\text{ for }i\ne j\},\\
\mathcal S_0(G)
&=\{A=[a_{ij}]\in\mathbb R^{n\times n}:A^T=A,
\ a_{ii}=0,
\ a_{ij}\ne0\Longleftrightarrow ij\in E(G)\text{ for }i\ne j\}.
\end{align*}
Thus, $\mathcal S_-(G)$ is the class of real skew-symmetric matrices
described by $G$, whereas $\mathcal S_0(G)$ is the class of real hollow
symmetric matrices described by $G$.  Define
\[
M_-(G)=\max_{A\in\mathcal S_-(G)}\operatorname{nullity}A,
\qquad
M_0(G)=\max_{A\in\mathcal S_0(G)}\operatorname{nullity}A.
\]
The \emph{minimum skew rank} of $G$ is
\[
\operatorname{mr}^-(G)
=\min_{A\in\mathcal S_-(G)}\operatorname{rank}A
=n-M_-(G).
\]
The standard matrix bounds for skew zero forcing are
\begin{equation}\label{eq:matrix-skew-bounds}
M_-(G)\le Z_-(G)
\qquad\text{and}\qquad
M_0(G)\le Z_-(G);
\end{equation}
see \cite{IMAISUMinimumSkewRank,LinZeroForcingGrundy}.  For the
path-containing products, we obtain lower bounds on $Z_-$ by constructing
matrices in one of these two classes with prescribed nullity.  Neither
inequality in
\eqref{eq:matrix-skew-bounds} is an equality in general; equality in the
families treated below is established only when a matrix certificate and a
forcing construction meet at the same value.

For a square matrix $A$, let $\sigma(A)$ denote its spectrum over
$\mathbb C$, counted as a multiset when multiplicities matter, and let
$m_A(\lambda)$ denote the algebraic multiplicity of $\lambda$.  Every real
symmetric or real skew-symmetric matrix is diagonalizable over $\mathbb C$.

%% file: 03_spectral_tools.tex
\section{Product-matrix and spectral tools}\label{sec:spectral-tools}

The symmetric Cartesian-product construction based on a Kronecker
difference is standard in the minimum-rank literature; see, for example,
\cite{BensonEtAlGraphProducts}.  The skew-adjacency Kronecker-sum and its
eigenvalue rule also appear in \cite{CuiHouSkewCartesian}.  We record the
weighted skew-symmetric and hollow symmetric versions needed below, together
with the relevant multiplicity bookkeeping.  The first lemma below verifies
that the Kronecker difference has exactly the zero--nonzero pattern required
by the Cartesian product.  The second counts its kernel through common factor
eigenvalues.  The remaining task in each application is therefore to choose
factor matrices whose common eigenvalues have the required multiplicities.

\subsection{Kronecker differences}

\begin{lemma}\label{lem:kronecker-pattern}
Let $G$ and $F$ be finite simple graphs of orders $m$ and $n$, respectively,
and let
\[
K=A\otimes I_n-I_m\otimes B.
\]
If $A\in\mathcal S_-(G)$ and $B\in\mathcal S_-(F)$, then
$K\in\mathcal S_-(G\square F)$.  If $A\in\mathcal S_0(G)$ and
$B\in\mathcal S_0(F)$, then $K\in\mathcal S_0(G\square F)$.
\end{lemma}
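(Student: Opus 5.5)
We index the vertices of $G\square F$ by pairs $(u,x)$ with $u\in\{1,\ldots,m\}$ and $x\in\{1,\ldots,n\}$, ordered lexicographically so that they match the row and column indexing of Kronecker products. The plan is to compute the entries of $K$ directly and then check two things: the required (skew-)symmetry together with a zero diagonal, and the off-diagonal zero--nonzero pattern.

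From the definition of the Kronecker product,
\[
K_{(u,x),(v,y)}=a_{uv}\,\delta_{xy}-\delta_{uv}\,b_{xy}.
\]

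\textbf{Transpose.} Transposition is compatible with the Kronecker product, so
\[
K^T=A^T\otimes I_n-I_m\otimes B^T.
\]
If $A^T=-A$ and $B^T=-B$, this equals $-K$. If $A^T=A$ and $B^T=B$, it equals $K$.

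\textbf{Diagonal.} The diagonal entry of $K$ at $(u,x)$ is $a_{uu}-b_{xx}$.

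In the skew case, every skew-symmetric real matrix has zero diagonal, so this entry is $0$. The class $\mathcal S_-$ places no diagonal constraint anyway, so the skew case needs nothing further here.

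In the hollow case, $a_{uu}=b_{xx}=0$ by assumption, so $K$ is hollow.

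\textbf{Off-diagonal pattern.} Take $(u,x)\ne(v,y)$ and split into three cases.

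\begin{itemize}
\item If $u\ne v$ and $x\ne y$, both Kronecker deltas vanish, so the entry is $0$. This is correct, since such a pair is never adjacent in $G\square F$.
\item If $u=v$ and $x\ne y$, the entry is $-b_{xy}$. It is nonzero if and only if $xy\in E(F)$, which is exactly the adjacency condition for $(u,x)$ and $(u,y)$ in $G\square F$.
\item If $x=y$ and $u\ne v$, the entry is $a_{uv}$. It is nonzero if and only if $uv\in E(G)$, which is the other adjacency condition.
\end{itemize}

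The key point is that in the mixed cases exactly one of the two terms survives, because $\delta_{uv}=0$ or $\delta_{xy}=0$ respectively. So no cancellation between the two terms can occur. This is the only step where the subtraction structure could cause trouble, and the case split handles it.

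Combining the three checks gives $K\in\mathcal S_-(G\square F)$ in the skew case and $K\in\mathcal S_0(G\square F)$ in the hollow case. We expect no genuine obstacle; the only care needed is keeping the index bookkeeping consistent with the chosen vertex ordering.
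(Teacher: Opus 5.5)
Your proof is correct and follows essentially the same route as the paper's. Both use the entry formula $k_{(u,x),(v,y)}=a_{uv}\delta_{xy}-\delta_{uv}b_{xy}$, the zero-diagonal check, the three-case off-diagonal pattern check, and the identity $K^T=A^T\otimes I_n-I_m\otimes B^T$.
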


\begin{proof}
Index the rows and columns of $K$ by $V(G)\times V(F)$.  For
$u,v\in V(G)$ and $x,y\in V(F)$, the corresponding entry is
\[
k_{(u,x),(v,y)}=a_{uv}\delta_{xy}-\delta_{uv}b_{xy},
\]
where $\delta$ is the Kronecker delta.  Since both $A$ and $B$ have zero
diagonal, every diagonal entry of $K$ is zero.

Let $(u,x)\ne(v,y)$.  If $x=y$, then
$k_{(u,x),(v,x)}=a_{uv}$; if $u=v$, then
$k_{(u,x),(u,y)}=-b_{xy}$; and if both coordinates differ, the entry is
zero.  Thus the nonzero off-diagonal entries occur precisely at the edges
of $G\square F$.  Finally,
\[
K^T=A^T\otimes I_n-I_m\otimes B^T.
\]
Hence $K^T=-K$ when both factors are skew-symmetric, and $K^T=K$ when
both factors are symmetric.  Together with the zero diagonal, this proves
both assertions.
\end{proof}

\begin{lemma}\label{lem:kronecker-nullity}
Let $A\in\mathbb C^{m\times m}$ and $B\in\mathbb C^{n\times n}$ be
diagonalizable.  Then
\[
\operatorname{nullity}(A\otimes I_n-I_m\otimes B)
=\sum_{\lambda\in\operatorname{supp}\sigma(A)\cap
\operatorname{supp}\sigma(B)}
m_A(\lambda)m_B(\lambda),
\]
where $\operatorname{supp}\sigma(A)$ denotes the set of distinct
eigenvalues of $A$ (with multiplicities suppressed), and similarly for
$B$.
\end{lemma}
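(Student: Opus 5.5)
The plan is to diagonalize both factors simultaneously and then read off the kernel of the resulting diagonal matrix. Choose invertible $P\in\mathbb C^{m\times m}$ and $Q\in\mathbb C^{n\times n}$ with
\[
P^{-1}AP=D_A=\operatorname{diag}(\alpha_1,\ldots,\alpha_m),
\qquad
Q^{-1}BQ=D_B=\operatorname{diag}(\beta_1,\ldots,\beta_n).
\]
Here each eigenvalue $\lambda$ of $A$ appears among the $\alpha_i$ exactly $m_A(\lambda)$ times, and similarly for $B$. Diagonalizability is exactly what makes these algebraic multiplicities coincide with the diagonal counts.

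Set $R=P\otimes Q$, which is invertible with $R^{-1}=P^{-1}\otimes Q^{-1}$. By the mixed-product rule $(X\otimes Y)(Z\otimes W)=XZ\otimes YW$,
\[
R^{-1}(A\otimes I_n)R=D_A\otimes I_n,
\qquad
R^{-1}(I_m\otimes B)R=I_m\otimes D_B.
\]
Hence $K=A\otimes I_n-I_m\otimes B$ is similar to the diagonal matrix $D_A\otimes I_n-I_m\otimes D_B$. The diagonal entry of this matrix indexed by $(i,j)$ is $\alpha_i-\beta_j$.

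Nullity is invariant under similarity, and the nullity of a diagonal matrix is the number of its zero diagonal entries. Therefore
\[
\operatorname{nullity}K=\#\{(i,j):\alpha_i=\beta_j\}.
\]
Group these pairs according to their common value $\lambda=\alpha_i=\beta_j$. Such a value must be an eigenvalue of both $A$ and $B$, so it lies in $\operatorname{supp}\sigma(A)\cap\operatorname{supp}\sigma(B)$. For each such $\lambda$ there are $m_A(\lambda)$ admissible indices $i$ and $m_B(\lambda)$ admissible indices $j$, hence exactly $m_A(\lambda)m_B(\lambda)$ pairs. Summing over $\lambda$ gives the formula.

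The only step needing care is the conjugation identity for the Kronecker products, which follows directly from the mixed-product rule. There is no real obstacle. The one point worth stating explicitly is that the diagonalizability hypothesis is used twice: to obtain $D_A$ and $D_B$, and to equate diagonal counts with algebraic multiplicities.
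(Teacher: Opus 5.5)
Your proof is correct and is essentially the paper's argument: the paper takes eigenbases $x_1,\ldots,x_m$ of $A$ and $y_1,\ldots,y_n$ of $B$, notes that the vectors $x_i\otimes y_j$ (the columns of your $P\otimes Q$) form a basis with $K(x_i\otimes y_j)=(\alpha_i-\beta_j)(x_i\otimes y_j)$, and counts the pairs with $\alpha_i=\beta_j$, which is your diagonal similarity read column by column. One small remark: diagonalizability is needed only to obtain $D_A$ and $D_B$, since any diagonal matrix similar to $A$ automatically lists the eigenvalues of $A$ with their algebraic multiplicities.
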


\begin{proof}
Choose eigenbases $x_1,\ldots,x_m$ of $A$ and $y_1,\ldots,y_n$ of $B$,
where $Ax_i=\alpha_i x_i$ and $By_j=\beta_j y_j$.  The vectors
$x_i\otimes y_j$ form a basis of $\mathbb C^{mn}$, and
\begin{align*}
(A\otimes I_n-I_m\otimes B)(x_i\otimes y_j)
&=(Ax_i)\otimes y_j-x_i\otimes(By_j)\\
&=(\alpha_i-\beta_j)(x_i\otimes y_j).
\end{align*}
Thus a basis vector lies in the kernel exactly when
$\alpha_i=\beta_j$.  Counting these pairs for each common eigenvalue gives
the formula.
\end{proof}

\begin{corollary}\label{cor:product-spectral-certificate}
Let $G$ and $F$ be finite simple graphs without isolated vertices and of
orders $m$ and $n$, respectively.  Suppose that either
\[
A\in\mathcal S_-(G),\qquad B\in\mathcal S_-(F),
\]
or
\[
A\in\mathcal S_0(G),\qquad B\in\mathcal S_0(F).
\]
Then
\[
Z_-(G\square F)
\ge
\sum_{\lambda\in\operatorname{supp}\sigma(A)\cap
\operatorname{supp}\sigma(B)}
m_A(\lambda)m_B(\lambda).
\]
Consequently,
\[
\gamma_{\mathrm{gr}}^t(G\square F)
\le mn-
\sum_{\lambda\in\operatorname{supp}\sigma(A)\cap
\operatorname{supp}\sigma(B)}
m_A(\lambda)m_B(\lambda).
\]
\end{corollary}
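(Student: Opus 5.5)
The corollary follows by chaining the two lemmas with the matrix bound \eqref{eq:matrix-skew-bounds}, and then applying Lin's identity \eqref{eq:grundy-skew-identity}. Set
\[
K=A\otimes I_n-I_m\otimes B .
\]
By Lemma~\ref{lem:kronecker-pattern}, $K$ belongs to $\mathcal S_-(G\square F)$ in the skew-symmetric case and to $\mathcal S_0(G\square F)$ in the hollow symmetric case. Hence $\operatorname{nullity}K\le M_-(G\square F)$ or $\operatorname{nullity}K\le M_0(G\square F)$, respectively. In either case \eqref{eq:matrix-skew-bounds} gives $\operatorname{nullity}K\le Z_-(G\square F)$.

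Next we evaluate $\operatorname{nullity}K$ using Lemma~\ref{lem:kronecker-nullity}. This needs $A$ and $B$ to be diagonalizable over $\mathbb C$. As recorded at the end of Section~\ref{sec:preliminaries}, every real symmetric and every real skew-symmetric matrix is diagonalizable over $\mathbb C$: symmetric matrices by the spectral theorem, and skew-symmetric ones because they are normal. The lemma then gives exactly the stated sum of products $m_A(\lambda)m_B(\lambda)$ over common eigenvalues. This yields the first inequality.

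For the second inequality, \eqref{eq:grundy-skew-identity} gives $\gamma_{\mathrm{gr}}^t(G\square F)=mn-Z_-(G\square F)$, since $|V(G\square F)|=mn$. Substituting the lower bound on $Z_-$ gives the upper bound on $\gamma_{\mathrm{gr}}^t$.

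The hypothesis that $G$ and $F$ have no isolated vertices ensures that $G\square F$ has none, so the Grundy total domination number is defined. There is no real obstacle here. The only point to watch is that the diagonalizability hypothesis of Lemma~\ref{lem:kronecker-nullity} is actually met, and that the matching class $\mathcal S_-$ or $\mathcal S_0$ is used in \eqref{eq:matrix-skew-bounds}.
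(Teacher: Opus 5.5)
Your proof is correct and is essentially the paper's argument: place $K$ in $\mathcal S_-(G\square F)$ or $\mathcal S_0(G\square F)$ via Lemma~\ref{lem:kronecker-pattern}, and count its kernel via Lemma~\ref{lem:kronecker-nullity} using diagonalizability of the real (skew-)symmetric factors. Then finish with \eqref{eq:matrix-skew-bounds} and \eqref{eq:grundy-skew-identity}. The paper also states one point you leave implicit: Lemma~\ref{lem:kronecker-nullity} computes nullity over $\mathbb C$, whereas $M_-$ and $M_0$ use real nullity, and these agree because $K$ is real (its rank is determined by its minors).
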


\begin{proof}
Set $K=A\otimes I_n-I_m\otimes B$.  Lemma~\ref{lem:kronecker-pattern}
places $K$ in $\mathcal S_-(G\square F)$ in the first case and in
$\mathcal S_0(G\square F)$ in the second.  Both factor matrices are
diagonalizable over $\mathbb C$, so Lemma~\ref{lem:kronecker-nullity}
applies.  Since $K$ is real, its real and complex nullities agree.  The
claim now follows from \eqref{eq:matrix-skew-bounds} and
\eqref{eq:grundy-skew-identity}.
\end{proof}

\subsection{Path spectra}

The following construction is the finite-support Jacobi inverse-spectrum
argument; compare \cite{deBoorGolubJacobi}.  Its existence direction also
follows from \cite[Theorem~2.4]{HassaniMonfaredMallikMatchings}, since $P_n$
is connected and has matching number $\lfloor n/2\rfloor$.  We include the
Jacobi construction to make the tridiagonal form and the positivity of its
edge weights explicit.

\begin{lemma}\label{lem:path-skew-spectrum}
Let $n\ge2$ and $r=\lfloor n/2\rfloor$.  A multiset $\Lambda$ of $n$
complex numbers is the spectrum of a matrix in $\mathcal S_-(P_n)$ if and
only if there exist distinct positive real numbers
$0<\lambda_1<\cdots<\lambda_r$ such that
\[
\Lambda=
\begin{cases}
\{\pm i\lambda_1,\ldots,\pm i\lambda_r\},&n=2r,\\
\{0,\pm i\lambda_1,\ldots,\pm i\lambda_r\},&n=2r+1.
\end{cases}
\]
Moreover, the realizing matrix may be chosen in the form
\[
A=
\begin{pmatrix}
0&b_1&&&0\\
-b_1&0&b_2&&\\
&-b_2&0&\ddots&\\
&&\ddots&\ddots&b_{n-1}\\
0&&&-b_{n-1}&0
\end{pmatrix},
\qquad b_1,\ldots,b_{n-1}>0.
\]
\end{lemma}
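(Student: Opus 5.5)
We prove necessity and sufficiency separately; sufficiency rests on reducing skew tridiagonal matrices to symmetric Jacobi matrices and applying the classical inverse problem.

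\emph{Necessity.} Let $A\in\mathcal S_-(P_n)$. Since $A$ is real skew-symmetric, its eigenvalues are purely imaginary and occur in conjugate pairs $\pm i\mu$. The interesting point is simplicity, and for this we use a diagonal similarity. Write the superdiagonal entries as $b_1,\dots,b_{n-1}$, all nonzero. Put $D=\operatorname{diag}(1,i,i^2,\dots,i^{n-1})$ and form $D^{-1}AD$. Its $(k,k+1)$ entry is $i\,b_k$, and its $(k+1,k)$ entry is $-i^{-1}b_k=i\,b_k$. Hence $D^{-1}AD=iJ$, where $J$ is the real symmetric tridiagonal matrix with zero diagonal and off-diagonal entries $b_k\neq0$. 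The eigenvalues of an unreduced Jacobi matrix are real and simple, since each eigenvector is determined by its first coordinate through the three-term recurrence. So $\sigma(A)=i\,\sigma(J)$ consists of $n$ distinct numbers.

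Because $J$ has zero diagonal, the similarity $\operatorname{diag}(1,-1,1,\dots)$ sends $J$ to $-J$, so $\sigma(J)$ is symmetric about $0$. With $n$ distinct values, this forces the form $\{\pm\lambda_1,\dots,\pm\lambda_r\}$, together with $0$ when $n$ is odd, where the $\lambda_j>0$ are distinct. Multiplying by $i$ gives $\Lambda$ exactly as displayed.

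\emph{Sufficiency.} Given distinct $0<\lambda_1<\dots<\lambda_r$, let $\Lambda'$ be the set of $n$ distinct reals $\pm\lambda_j$, with $0$ added when $n$ is odd. By the classical Jacobi inverse eigenvalue theorem (Hochstadt / de Boor--Golub), for any $n$ distinct reals and any positive weights $w_1,\dots,w_n$ summing to $1$, there is a unique Jacobi matrix $J$ with positive off-diagonal entries, spectrum $\Lambda'$, and squared first components of its normalized eigenvectors equal to the $w_k$. Choose the weights symmetric, $w(\mu)=w(-\mu)$.

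We now show the resulting $J$ has zero diagonal. Let $S=\operatorname{diag}(1,-1,\dots)$. Then $-SJS$ is again a Jacobi matrix with positive off-diagonal entries. It has spectrum $-\Lambda'=\Lambda'$, and its eigenvector for $-\mu$ is $S$ applied to the eigenvector of $J$ for $\mu$, which has the same first component. Its weights are therefore $w(-\mu)=w(\mu)$. By uniqueness, $-SJS=J$. Comparing diagonals gives $J_{kk}=-J_{kk}$, so $J_{kk}=0$.

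Let $b_k>0$ be the off-diagonal entries of $J$, and define $A$ to be the skew tridiagonal matrix in the statement with these $b_k$. The computation from the necessity part gives $A=D(iJ)D^{-1}$, so $\sigma(A)=i\Lambda'=\Lambda$. Since every $b_k\ne0$, the matrix $A$ lies in $\mathcal S_-(P_n)$.

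\emph{Main obstacle.} The delicate point is forcing the zero diagonal in the sufficiency direction. The symmetric choice of weights combined with the uniqueness clause of the inverse theorem handles this. If a self-contained argument is preferred, one can instead run the Lanczos recursion on $\operatorname{diag}(\Lambda')$ starting from a symmetric weight vector. Each Lanczos vector is then even or odd under $\mu\mapsto-\mu$, so every diagonal coefficient vanishes.
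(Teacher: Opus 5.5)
Your proof is correct. The necessity direction matches the paper's. The paper first switches signs so the superdiagonal is positive, and you rightly skip that step: the recurrence argument for simplicity only needs the off-diagonal entries of $J$ to be nonzero, and $D^{-1}AD=iJ$ holds for any signs.

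For sufficiency the two routes differ. The paper's argument is self-contained:
\begin{itemize}
\item it builds $J$ directly as the matrix of multiplication by $x$ in the orthonormal-polynomial basis for a symmetric discrete measure on $\Omega$;
\item it reads off positivity of the off-diagonal entries from the leading coefficients;
\item it reads off the zero diagonal from the parity relation $p_k(-x)=(-1)^kp_k(x)$.
\end{itemize}
You instead cite the classical existence-and-uniqueness theorem for Jacobi matrices with prescribed spectrum and squared first eigenvector components. You then get the zero diagonal from a symmetry argument: $-SJS$ is again a Jacobi matrix carrying the same spectral data, so uniqueness forces $-SJS=J$.

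Your route is shorter and shows that the vanishing diagonal is a formal consequence of symmetric spectral data. However, it relies on the uniqueness half of the inverse theorem, which the paper's explicit construction never needs. Your Lanczos fallback is essentially the paper's argument in different language: the Lanczos vectors are the values $p_k(\mu)$ scaled by the square-root weights, and their even/odd behaviour is the parity of the $p_k$.
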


\begin{proof}
Let $A\in\mathcal S_-(P_n)$, and denote its nonzero superdiagonal entries
by $c_1,\ldots,c_{n-1}$.  Diagonal switching makes them all positive:
take $\varepsilon_1=1$ and
\[
\varepsilon_{j+1}=\varepsilon_j\operatorname{sgn}(c_j),
\qquad E=\operatorname{diag}(\varepsilon_1,\ldots,\varepsilon_n).
\]
Then $A_+=EAE$ has superdiagonal entries $b_j=|c_j|$.

Let $J$ be the real symmetric tridiagonal matrix with zero diagonal and
positive sub- and superdiagonal entries $b_1,\ldots,b_{n-1}$, and put
$D=\operatorname{diag}(1,i,i^2,\ldots,i^{n-1})$.  Direct calculation gives
$D^*JD=iA_+$.  Every eigenvalue of $J$ is simple: the first coordinate of
an eigenvector determines all later coordinates through the tridiagonal
recurrence, and an eigenvector whose first coordinate is zero is zero.
Moreover, for $R=\operatorname{diag}(1,-1,1,-1,\ldots)$, we have
$RJR=-J$.  Hence the spectrum is simple and symmetric about zero, with
zero occurring exactly when $n$ is odd.  This proves necessity.

Conversely, fix the prescribed positive numbers and let
\[
\Omega=
\begin{cases}
\{-\lambda_r,\ldots,-\lambda_1,\lambda_1,\ldots,\lambda_r\},&n=2r,\\
\{-\lambda_r,\ldots,-\lambda_1,0,\lambda_1,\ldots,\lambda_r\},&n=2r+1.
\end{cases}
\]
Choose positive weights $w_t$ on $\Omega$ satisfying $w_t=w_{-t}$, and
define
\[
\langle f,g\rangle=\sum_{t\in\Omega}w_t f(t)g(t).
\]
Applying Gram--Schmidt to $1,x,\ldots,x^{n-1}$ gives orthonormal
polynomials $p_0,\ldots,p_{n-1}$ with positive leading coefficients.
Let $X$ be multiplication by $x$ and let $J$ be its matrix in this basis.
Self-adjointness and degree orthogonality show that $J$ is tridiagonal.
If $\kappa_k$ is the leading coefficient of $p_k$, then
\[
\langle Xp_k,p_{k+1}\rangle=\frac{\kappa_k}{\kappa_{k+1}}>0.
\]
The symmetric support and weights imply $p_k(-x)=(-1)^kp_k(x)$, and
therefore $\langle Xp_k,p_k\rangle=0$.  Thus $J$ has zero diagonal and
positive off-diagonal entries.  In the point-mass basis, $X$ is diagonal
with spectrum $\Omega$, so $\sigma(J)=\Omega$.  Form the skew-symmetric
matrix $A$ from the off-diagonal entries of $J$.  Since $D^*JD=iA$,
\[
 \sigma(A)=-i\sigma(J)=-i\Omega=\Lambda.
\]
Thus $A$ has exactly the prescribed spectrum.
\end{proof}

\begin{corollary}\label{cor:path-spectral-matching}
Let $2\le a\le b$, and define
\[
s(a,b)=
\begin{cases}
a-1,&a\text{ is odd and }b\text{ is even},\\
a,&\text{otherwise}.
\end{cases}
\]
Then matrices $A\in\mathcal S_-(P_a)$ and
$B\in\mathcal S_-(P_b)$ can be chosen so that
\[
\operatorname{nullity}(A\otimes I_b-I_a\otimes B)=s(a,b).
\]
Consequently,
\[
Z_-(P_a\square P_b)\ge s(a,b).
\]
\end{corollary}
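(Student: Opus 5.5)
The plan is to use Lemma~\ref{lem:path-skew-spectrum} to prescribe the spectra of $A$ and $B$ so that they share as many eigenvalues as possible. Then Lemma~\ref{lem:kronecker-nullity} counts the nullity, and Corollary~\ref{cor:product-spectral-certificate} turns it into the lower bound on $Z_-(P_a\square P_b)$. Since every spectrum allowed by Lemma~\ref{lem:path-skew-spectrum} is simple, each common eigenvalue contributes exactly $m_A(\lambda)m_B(\lambda)=1$. The nullity of $K=A\otimes I_b-I_a\otimes B$ is therefore the number of distinct common eigenvalues, and the task reduces to arranging that this number is $s(a,b)$.

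Write $r=\lfloor a/2\rfloor$ and $t=\lfloor b/2\rfloor$, so $r\le t$. Choose any $0<\lambda_1<\cdots<\lambda_r$ and let $A\in\mathcal S_-(P_a)$ have spectrum $\{\pm i\lambda_1,\ldots,\pm i\lambda_r\}$, with $0$ added when $a$ is odd. For $B$, take $0<\lambda_1<\cdots<\lambda_r<\mu_{r+1}<\cdots<\mu_t$, using the same first $r$ numbers and arbitrary larger extra values. Give $B$ the spectrum $\{\pm i\lambda_j\}_{j\le r}\cup\{\pm i\mu_k\}_{r<k\le t}$, with $0$ added when $b$ is odd. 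Lemma~\ref{lem:path-skew-spectrum} guarantees that both matrices exist. The common eigenvalues are the $2r$ numbers $\pm i\lambda_j$, together with $0$ exactly when $a$ and $b$ are both odd. The nullity is then computed case by case.
\begin{itemize}[nosep]
\item If $a$ is even, the nullity is $2r=a$.
\item If $a$ and $b$ are both odd, the nullity is $2r+1=a$.
\item If $a$ is odd and $b$ is even, $0\notin\sigma(B)$, so the nullity is $2r=a-1$.
\end{itemize}
In every case this equals $s(a,b)$. Lemma~\ref{lem:kronecker-pattern} places $K$ in $\mathcal S_-(P_a\square P_b)$, and Corollary~\ref{cor:product-spectral-certificate} gives $Z_-(P_a\square P_b)\ge s(a,b)$.

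The only point needing care is the ordering constraint in Lemma~\ref{lem:path-skew-spectrum}: the moduli must be distinct positive reals. Placing the extra values $\mu_k$ above $\lambda_r$ satisfies this, and the possibility $r=t$ causes no difficulty. The case $a=b$ with $a$ odd is covered by the both-odd case. No step is a serious obstacle, since the argument is spectral bookkeeping on top of the earlier lemmas.
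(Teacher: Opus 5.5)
Your proposal is correct and follows the paper's own proof: you use Lemma~\ref{lem:path-skew-spectrum} to give the two path factors simple spectra that share the first $\lfloor a/2\rfloor$ conjugate pairs and are otherwise disjoint, with zero common exactly when $a$ and $b$ are both odd. You then read off the nullity through Lemma~\ref{lem:kronecker-nullity} and Corollary~\ref{cor:product-spectral-certificate}; your explicit case check and ordering of the extra moduli only spell out what the paper states briefly.
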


\begin{proof}
Every path eigenvalue in Lemma~\ref{lem:path-skew-spectrum} is simple.
Choose $\lfloor a/2\rfloor$ distinct positive numbers for the nonzero
spectrum of the first factor and include the same numbers among the
positive spectral parameters of the second factor.  Choose all remaining
parameters disjointly.  The zero eigenvalue is shared exactly when both
$a$ and $b$ are odd.  Hence the spectra have exactly $s(a,b)$ common
eigenvalues.  Lemma~\ref{lem:kronecker-nullity} and
Corollary~\ref{cor:product-spectral-certificate} give the assertions.
\end{proof}

\begin{lemma}\label{lem:hollow-path-spectrum}
Let $n\ge2$ and $r=\lfloor n/2\rfloor$.  A multiset $\Lambda$ of $n$
real numbers is the spectrum of a matrix in $\mathcal S_0(P_n)$ if and
only if there exist distinct positive real numbers
$0<\lambda_1<\cdots<\lambda_r$ such that
\[
\Lambda=
\begin{cases}
\{\pm\lambda_1,\ldots,\pm\lambda_r\},&n=2r,\\
\{0,\pm\lambda_1,\ldots,\pm\lambda_r\},&n=2r+1.
\end{cases}
\]
Moreover, the realizing matrix may be chosen with positive sub- and
superdiagonal entries.
\end{lemma}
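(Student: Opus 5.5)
The natural route is to reduce Lemma~\ref{lem:hollow-path-spectrum} to Lemma~\ref{lem:path-skew-spectrum}, or rather to the Jacobi-matrix facts established inside its proof. Those facts already concern the real symmetric tridiagonal matrix $J$, which is exactly a hollow symmetric matrix with path pattern.

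\emph{Necessity.} Let $H\in\mathcal S_0(P_n)$ have nonzero off-diagonal entries $c_1,\ldots,c_{n-1}$. Define $\varepsilon_1=1$ and $\varepsilon_{j+1}=\varepsilon_j\operatorname{sgn}(c_j)$, and set $E=\operatorname{diag}(\varepsilon_j)$. Since $E=E^{-1}$, the matrix $EHE$ is similar to $H$. It has zero diagonal, and its sub- and superdiagonal entries are $|c_j|>0$.

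For this normalized $J$ we repeat two arguments from the proof of Lemma~\ref{lem:path-skew-spectrum}.
\begin{itemize}[nosep]
\item The tridiagonal recurrence shows that an eigenvector is determined by its first coordinate. Hence every eigenvalue has geometric multiplicity one, and by symmetry (diagonalizability) it is simple.
\item With $R=\operatorname{diag}(1,-1,1,\ldots)$ we have $RJR=-J$, so the spectrum is symmetric about $0$.
\end{itemize}
The spectrum is real because $J$ is symmetric. Being simple and symmetric about $0$, it contains $0$ exactly when $n$ is odd, since the nonzero eigenvalues pair off. Listing the positive eigenvalues as $\lambda_1<\cdots<\lambda_r$ gives the stated form of $\Lambda$.

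\emph{Sufficiency.} Given $0<\lambda_1<\cdots<\lambda_r$, form the symmetric set $\Omega$ exactly as in the proof of Lemma~\ref{lem:path-skew-spectrum}; it equals $\Lambda$ in the present statement. Then run the same orthogonal-polynomial construction:
\begin{itemize}[nosep]
\item choose symmetric positive weights $w_t=w_{-t}$ on $\Omega$;
\item apply Gram--Schmidt to $1,x,\ldots,x^{n-1}$;
\item take $J$ to be the matrix of multiplication by $x$ in the resulting orthonormal basis.
\end{itemize}
That proof shows $J$ is real symmetric tridiagonal, with zero diagonal (from the parity $p_k(-x)=(-1)^kp_k(x)$) and positive off-diagonal entries $\kappa_k/\kappa_{k+1}$. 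It also shows $\sigma(J)=\Omega$. Hence $J\in\mathcal S_0(P_n)$ realizes $\Lambda$ and has positive sub- and superdiagonal entries, which gives the ``moreover'' clause. No conjugation by $D$ is needed here, because we want $J$ itself rather than $iJ$.

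There is no serious obstacle. The one point to check is that the Gram--Schmidt process produces exactly $n$ polynomials, which holds because $|\Omega|=n$ makes the inner product on polynomials of degree less than $n$ nondegenerate. One must also confirm that multiplication by $x$ maps this $n$-dimensional space into itself. This holds when we work in the function space on $\Omega$, which has dimension $n$ and has $p_0,\ldots,p_{n-1}$ as a basis. Both points are already implicit in the earlier proof.
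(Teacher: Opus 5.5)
Your proof is correct and follows the paper's argument. Necessity uses the same switching, tridiagonal-recurrence and $RJR=-J$ arguments. Sufficiency uses the same Jacobi/orthogonal-polynomial construction from Lemma~\ref{lem:path-skew-spectrum}. The only difference is cosmetic: the paper takes $A\in\mathcal S_-(P_n)$ from that lemma's statement and returns to $J$ via $D^*JD=iA$, whereas you take the Jacobi matrix $J$ directly from inside its proof.
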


\begin{proof}
Every matrix in $\mathcal S_0(P_n)$ is an irreducible real symmetric
tridiagonal matrix.  The same recurrence and switching argument used in
Lemma~\ref{lem:path-skew-spectrum} show that its spectrum is simple and
symmetric about zero, with zero occurring exactly when $n$ is odd.  This
proves necessity.  Conversely, realize the corresponding purely imaginary
spectrum by a matrix $A\in\mathcal S_-(P_n)$ using
Lemma~\ref{lem:path-skew-spectrum}.  If $J$ is the hollow symmetric
tridiagonal matrix with the same positive edge weights and
$D=\operatorname{diag}(1,i,\ldots,i^{n-1})$, then $D^*JD=iA$.
Therefore $J$ has the prescribed real spectrum.
\end{proof}
\subsection{Signed cycles and path--cycle certificates}

Every eigenvalue of a real skew-symmetric matrix has the form $i\lambda$
with $\lambda\in\mathbb R$.  For a cyclic recurrence, Fourier modes have the
form $z^j$ with $z^n=\pm1$; the skew expression $z-z^{-1}$ produces
$2i\sin\theta$, whereas the symmetric expression $z+z^{-1}$ produces
$2\cos\theta$.  Thus the sine and cosine formulas below are the real
frequency forms of the same cyclic recurrence, and spectral matching amounts
to matching these frequencies with their multiplicities.

The switching classes and skew spectra of oriented graphs are standard;
see \cite{ShaderSoSkewSpectra}.  For $n\ge3$ and
$\varepsilon\in\{0,1\}$, define $S_n^\varepsilon$ by
\[
(S_n^\varepsilon)_{j,j+1}=1,
\qquad
(S_n^\varepsilon)_{j+1,j}=-1
\qquad (1\le j<n),
\]
and
\[
(S_n^\varepsilon)_{n,1}=(-1)^\varepsilon,
\qquad
(S_n^\varepsilon)_{1,n}=-(-1)^\varepsilon,
\]
with all other entries zero.

\begin{lemma}\label{lem:signed-cycle-spectrum}
Let $n\ge3$ and $\varepsilon\in\{0,1\}$.  Then, as a multiset,
\[
\sigma(S_n^\varepsilon)
=\left\{
2i\sin\left(\frac{(2k+\varepsilon)\pi}{n}\right):0\le k<n
\right\}.
\]
If $n$ is odd, all eigenvalues are simple, zero occurs exactly once, and
$S_n^0$ and $S_n^1$ are cospectral.  If $n$ is even, zero has multiplicity
two for $\varepsilon=0$ and does not occur for $\varepsilon=1$; every
eigenvalue other than $\pm2i$ that occurs has multiplicity two.  The
eigenvalues $2i$ and $-2i$ occur, each with multiplicity one, if and only if
\[
\frac n2\equiv\varepsilon\pmod2.
\]
\end{lemma}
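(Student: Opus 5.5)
The plan is to diagonalize $S_n^\varepsilon$ explicitly with Fourier-type eigenvectors and then read off the multiplicities by elementary trigonometry.

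\emph{Eigenvectors.} Let $z$ satisfy $z^n=(-1)^\varepsilon$, and put $x=(1,z,z^2,\ldots,z^{n-1})^T$.

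For an interior row $1<j<n$ we have
\[
(S_n^\varepsilon x)_j=z^{j}-z^{j-2}=(z-z^{-1})x_j .
\]
For row $1$ the entries are $(S_n^\varepsilon)_{1,2}=1$ and $(S_n^\varepsilon)_{1,n}=-(-1)^\varepsilon$. Using $z^{n-1}=(-1)^\varepsilon z^{-1}$, this gives
\[
(S_n^\varepsilon x)_1=z-(-1)^\varepsilon z^{n-1}=z-z^{-1}.
\]
For row $n$ the entries are $(S_n^\varepsilon)_{n,n-1}=-1$ and $(S_n^\varepsilon)_{n,1}=(-1)^\varepsilon$. Using $(-1)^\varepsilon=z^n$, this gives
\[
(S_n^\varepsilon x)_n=-z^{n-2}+z^{n}=(z-z^{-1})z^{n-1}.
\]
So $x$ is an eigenvector with eigenvalue $z-z^{-1}$.

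\emph{Spectrum.} The $n$ admissible values are $z_k=e^{i(2k+\varepsilon)\pi/n}$ for $0\le k<n$. They are distinct, so the vectors $x$ form a Vandermonde basis, and $S_n^\varepsilon$ is diagonalizable. This gives the multiset
\[
\{z_k-z_k^{-1}\}=\{2i\sin\theta_k\},\qquad \theta_k=\frac{(2k+\varepsilon)\pi}{n}.
\]

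\emph{Multiplicities.} The main step is counting, for each value, how many $k$ give $\sin\theta_k=s$. The angles $\theta_k$ run over an arithmetic progression in $[0,2\pi)$ with step $2\pi/n$. We have $\sin\theta=\sin\theta'$ exactly when $\theta'=\theta$ or $\theta'\equiv\pi-\theta\pmod{2\pi}$. Under the map $\theta\mapsto\pi-\theta$, the index $2k+\varepsilon$ goes to $n-(2k+\varepsilon)$ modulo $2n$.

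\emph{Case $n$ odd.} Then $n-(2k+\varepsilon)$ has parity opposite to $\varepsilon$, so the reflection never lands in the same set. Hence every eigenvalue is simple. The value $0$ requires $\theta_k\in\{0,\pi\}$, that is $2k+\varepsilon\in\{0,n\}$, and exactly one of these has the parity of $\varepsilon$. For cospectrality, the reflection maps the $\varepsilon=0$ angle set bijectively onto the $\varepsilon=1$ angle set and preserves sines, so $S_n^0$ and $S_n^1$ have the same spectrum.

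\emph{Case $n$ even.} Now the reflection preserves the parity class, so eigenvalues pair up. A fixed point occurs exactly when $2(2k+\varepsilon)\equiv n\pmod{2n}$, which means $\theta_k=\pi/2$ or $3\pi/2$, giving eigenvalue $\pm2i$. This requires $2k+\varepsilon=n/2$ or $3n/2$, which is possible exactly when $n/2\equiv\varepsilon\pmod 2$. When it happens, both $2i$ and $-2i$ occur and each is simple.

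All other values come in pairs. The one remaining check is the zero eigenvalue, where the pair is $\{0,\pi\}$. These two angles both lie in the set when $\varepsilon=0$, since $0$ and $n$ are both even. Neither lies in the set when $\varepsilon=1$. This gives multiplicity two and zero, respectively.

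The main obstacle is only this multiplicity bookkeeping; in particular one must check that distinct reflection pairs give distinct sine values, which holds because $\sin$ is injective on each of $[-\pi/2,\pi/2]$ and $[\pi/2,3\pi/2]$.
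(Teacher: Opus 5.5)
Your proof is correct and follows essentially the same route as the paper: you use the Fourier eigenvectors $(1,z,\ldots,z^{n-1})^T$ with $z^n=(-1)^\varepsilon$ as a Vandermonde eigenbasis, and then count coincidences $\sin\theta_k=\sin\theta_\ell$ through the reflection $\theta\mapsto\pi-\theta$, which is exactly the paper's congruence $2(k+\ell+\varepsilon)\equiv n\pmod{2n}$. Your parity-class bijection for cospectrality when $n$ is odd is the paper's substitution $\ell\equiv(n-1)/2-k\pmod n$ in different words, and you also write out the two wrap-around rows that the paper leaves as a direct calculation.
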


\begin{proof}
For $0\le k<n$, put
\[
\theta_k=\frac{(2k+\varepsilon)\pi}{n},
\qquad z_k=e^{i\theta_k},
\qquad v_k=(1,z_k,\ldots,z_k^{n-1})^T.
\]
Since $z_k^n=(-1)^\varepsilon$, direct calculation, including at the two
wrap-around coordinates, gives
\[
S_n^\varepsilon v_k=(z_k-z_k^{-1})v_k
=2i\sin(\theta_k)v_k.
\]
The numbers $z_k$ are the distinct roots of
$z^n=(-1)^\varepsilon$, so the vectors $v_k$ form a Vandermonde basis.

Now $\sin\theta_k=\sin\theta_\ell$ if and only if either $k=\ell$ or
\[
2(k+\ell+\varepsilon)\equiv n\pmod{2n}.
\]
For odd $n$ the latter congruence has no solution, and zero occurs once.
The substitution $\ell\equiv(n-1)/2-k\pmod n$ shows that the two sign
choices are cospectral.  For even $n$, the second congruence pairs indices
under
\[
\ell\equiv\frac n2-\varepsilon-k\pmod n.
\]
Its fixed points exist exactly when $n/2\equiv\varepsilon\pmod2$ and
produce the simple values $\pm2i$.  Finally,
$2k+\varepsilon\equiv0\pmod n$ has two solutions for
$\varepsilon=0$ and none for $\varepsilon=1$.  The stated multiplicities
follow.
\end{proof}

\begin{corollary}\label{cor:path-signed-cycle-certificate}
Let $p\ge2$ and $c\ge3$, and define
\[
\eta(p,c)=
\begin{cases}
\min\{2p,c\},&c\text{ is even},\\
c-1,&c\text{ is odd},\ p\text{ is even},\text{ and }p\ge c,\\
\min\{p,c\},&\text{otherwise}.
\end{cases}
\]
Then
\[
\max_{\substack{A\in\mathcal S_-(P_p)\\
\varepsilon\in\{0,1\}}}
\operatorname{nullity}
\bigl(A\otimes I_c-I_p\otimes S_c^\varepsilon\bigr)
=\eta(p,c).
\]
Consequently, $Z_-(P_p\square C_c)\ge \eta(p,c)$.
\end{corollary}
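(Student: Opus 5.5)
The plan is to turn the maximization into a counting problem about common eigenvalues. Both $A$ and $S_c^\varepsilon$ are real skew-symmetric, so they are diagonalizable. Lemma~\ref{lem:kronecker-nullity} then gives
\[
\operatorname{nullity}\bigl(A\otimes I_c-I_p\otimes S_c^\varepsilon\bigr)
=\sum_{\lambda\in\sigma(A)}m_{S_c^\varepsilon}(\lambda),
\]
where the sum runs over the distinct eigenvalues of $A$; this uses that every eigenvalue of $A$ is simple (Lemma~\ref{lem:path-skew-spectrum}). By that lemma, $\sigma(A)$ consists of $r=\lfloor p/2\rfloor$ arbitrarily prescribed conjugate pairs $\pm i\lambda_j$ with distinct $\lambda_j>0$, together with $0$ exactly when $p$ is odd. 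So the task is to compute, for each $\varepsilon$, the maximum of this sum over such choices, and then maximize over $\varepsilon$. Lemma~\ref{lem:signed-cycle-spectrum} gives all the multiplicities needed. The final inequality $Z_-(P_p\square C_c)\ge\eta(p,c)$ then follows from Lemma~\ref{lem:kronecker-pattern} and Corollary~\ref{cor:product-spectral-certificate}, since $S_c^\varepsilon\in\mathcal S_-(C_c)$.

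\emph{Odd $c$.} Here every eigenvalue of $S_c^\varepsilon$ is simple: there is $0$ and $(c-1)/2$ conjugate pairs. Each nonzero pair matched by $A$ contributes $2$, and a shared zero contributes $1$. Hence the maximum is
\[
2\min\{r,(c-1)/2\}+\mathbf 1_{\{p\text{ odd}\}}.
\]
This is attained by choosing the $\lambda_j$ to be the positive moduli $2\sin\theta_k$ that occur in $\sigma(S_c^\varepsilon)$, and it is clearly an upper bound. A four-way parity check ($p$ odd or even, $p<c$ or $p\ge c$) gives $\min\{p,c\}$, except when $p$ is even and $p\ge c$. In that case $0\notin\sigma(A)$, the sum is at most the number $c-1$ of nonzero eigenvalues of $S_c^\varepsilon$, and the value is $c-1$.

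\emph{Even $c$.} The upper bound $\min\{2p,c\}$ is uniform. Every multiplicity $m_{S_c^\varepsilon}(\lambda)$ is at most $2$, so the sum is at most $2p$. It is also at most $\sum_\lambda m_{S_c^\varepsilon}(\lambda)=c$.

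For attainment I will treat the two cases $\varepsilon=0$ and $\varepsilon=1$ separately. In each case I list the distinct positive moduli of $\sigma(S_c^\varepsilon)$ with their multiplicities; those other than $2$ have multiplicity $2$, while the modulus $2$ appears (simple, as $\pm2i$) only when $c/2\equiv\varepsilon\pmod2$. If $p$ is odd, I use $\varepsilon=0$, so that the shared zero contributes $2$. The $r$ pairs should then be assigned to doubled moduli first, each such pair contributing $4$. Any leftover pairs go to the simple pair $\pm2i$ if present, contributing $2$, and otherwise to moduli not in the spectrum.

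This assignment step is the one I expect to be the main obstacle, and it should be done carefully by counting residues. I need to check, for each residue of $c$ modulo $4$ and each parity of $p$, that some $\varepsilon$ supplies enough doubled pairs to reach $2p$ when $2p\le c$, and exhausts the whole spectrum when $2p\ge c$. For example:
\begin{itemize}[nosep]
\item if $c\equiv2\pmod4$ and $p$ is even, take $\varepsilon=1$, which has $(c-2)/4$ doubled pairs and the simple pair $\pm2i$;
\item if $c\equiv0\pmod4$ and $p$ is even, take $\varepsilon=1$, which has $c/4$ doubled pairs;
\item if $p$ is odd, take $\varepsilon=0$, which has a double zero and $\lfloor(c-2)/4\rfloor$ doubled pairs, plus $\pm2i$ when $c\equiv0\pmod4$.
\end{itemize}
In each case the resulting sum is $\min\{2p,c\}$, which matches the upper bound and completes the proof.
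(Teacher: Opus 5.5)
Your proposal is correct and follows essentially the same route as the paper. Both proofs use Lemma~\ref{lem:kronecker-nullity} to reduce the nullity to counting common eigenvalues weighted by their cycle multiplicities, bound the count by simplicity (excluding zero when $p$ is even) for odd $c$ and by $\min\{2p,c\}$ for even $c$, and attain the bound with $\varepsilon\equiv p+1\pmod 2$. Your mod-$4$ count of doubled conjugate pairs unpacks the paper's parity remark: when $\pm2i$ occur one has $c/2\not\equiv p\pmod 2$, so $2p\le c$ forces $2p\le c-2$, which leaves enough doubled eigenvalues.
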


\begin{proof}
Every eigenvalue of a matrix in $\mathcal S_-(P_p)$ is simple.  Suppose
first that $c$ is even.  Choose $\varepsilon=0$ when $p$ is odd and
$\varepsilon=1$ when $p$ is even.  If $2p\le c$, the multiplicity-two
eigenvalues of $S_c^\varepsilon$ contain a negation-invariant $p$-element
set, with zero included exactly when $p$ is odd.  Indeed, if the simple
values $\pm2i$ occur, then $c/2\equiv\varepsilon\pmod2$, whereas $p$ has
the opposite parity, so at least $p$ double eigenvalues remain.  Realize
the selected set as a path spectrum.  Every common eigenvalue contributes
two, giving nullity $2p$.

If $c<2p$, the set of distinct eigenvalues of
$S_c^\varepsilon$ has at most $p$ elements, is invariant under negation,
and contains zero exactly when $p$ is odd.  Extend it, if necessary, to an
admissible $p$-element path spectrum.  All eigenvalues of the cycle factor
are then common, so their multiplicities sum to $c$.  This proves the
even-cycle formula and its optimality within the family in the statement,
since a simple $p$-point path spectrum can contribute at most two per
eigenvalue and at most $c$ in total.

Now let $c$ be odd.  The cycle spectrum is simple and contains zero.  If
$p<c$, select a negation-invariant $p$-element subset and realize it as a
path spectrum, giving nullity $p$.  If $p\ge c$ and $p$ is odd, include
the entire cycle spectrum, giving nullity $c$.  If $p\ge c$ and $p$ is
even, zero cannot occur in the path spectrum, but all $c-1$ nonzero cycle
eigenvalues can occur.  For odd $c$, both factor spectra are simple, so at
most $\min\{p,c\}$ eigenvalues can be common; in the final case, excluding
zero reduces this bound to $c-1$.  Thus these values are optimal within the
same normalized signed-cycle family.
Corollary~\ref{cor:product-spectral-certificate} yields the lower bound
on $Z_-$.
\end{proof}

When $p$ is even and $c$ is odd, the product order $pc$ is even, and the
nullity of every real skew-symmetric matrix of this order is even.  Thus
no skew-symmetric matrix certificate can attain the odd target $c$ in the
exceptional range $p\ge c$.  This parity obstruction motivates the hollow
symmetric construction below.

\begin{lemma}\label{lem:odd-cycle-adjacency-spectrum}
Let $c\ge3$ be odd, and let $H_c=A(C_c)$ be the ordinary adjacency matrix
of $C_c$.  Then $H_c\in\mathcal S_0(C_c)$ and
\[
\sigma(H_c)=
\left\{2\cos\left(\frac{2\pi k}{c}\right):0\le k<c\right\}.
\]
Its distinct eigenvalues form the set
\[
T_c=\left\{2\cos\left(\frac{2\pi k}{c}\right):
0\le k\le\frac{c-1}{2}\right\}.
\]
The eigenvalue $2$ has multiplicity one, every element of
$T_c\setminus\{2\}$ has multiplicity two, and
$0\notin T_c$ and $T_c\cap(-T_c)=\varnothing$.
\end{lemma}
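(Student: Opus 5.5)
The plan is to mirror the Fourier computation in the proof of Lemma~\ref{lem:signed-cycle-spectrum}, with the skew expression $z-z^{-1}$ replaced by the symmetric one $z+z^{-1}$. First, $H_c\in\mathcal S_0(C_c)$ is immediate from the definition. $H_c$ is real symmetric with zero diagonal, and its off-diagonal entries are $1$ exactly at the edges of $C_c$ and $0$ elsewhere.

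For the spectrum, I will put $z_k=e^{2\pi ik/c}$ and $v_k=(1,z_k,\ldots,z_k^{c-1})^T$ for $0\le k<c$. Since $z_k^c=1$, the wrap-around rows behave like the interior rows, and $H_cv_k=(z_k+z_k^{-1})v_k=2\cos(2\pi k/c)\,v_k$. The $z_k$ are the distinct $c$-th roots of unity, so the $v_k$ form a Vandermonde basis. This gives the multiset spectrum stated in the lemma.

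For the multiplicities, note that $\cos(2\pi k/c)=\cos(2\pi\ell/c)$ holds exactly when $k\equiv\pm\ell\pmod c$. The value $k=0$ gives the eigenvalue $2$, and it is simple because $k\equiv-k\pmod c$ forces $k=0$ when $c$ is odd. For $1\le k\le(c-1)/2$, the index $k$ pairs with $c-k\ne k$, and distinct indices in this range give distinct cosines, since $2\pi k/c\in(0,\pi)$ and cosine is injective there. So the distinct eigenvalues are exactly $T_c$, with $2$ simple and every other element of multiplicity two.

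It remains to show $0\notin T_c$ and $T_c\cap(-T_c)=\varnothing$, and this is the only step needing care. Suppose $2\cos(2\pi k/c)=-2\cos(2\pi\ell/c)$. Then $\cos(2\pi k/c)=\cos(\pi-2\pi\ell/c)=\cos(\pi(c-2\ell)/c)$. Equality of cosines then requires $2\pi k/c\equiv\pm\pi(c-2\ell)/c\pmod{2\pi}$, that is, $2k\equiv\pm(c-2\ell)\pmod{2c}$. The left side is even and the right side is odd because $c$ is odd, so this is impossible. Taking $k=\ell$ shows the case $0\in T_c$ (which would require $x=-x$) is covered as well. Equivalently, every eigenvalue is $z+z^{-1}$ with $z^c=1$, while its negative is $w+w^{-1}$ with $w=-z$ satisfying $w^c=-1$, and no root of unity satisfies both $w^c=1$ and $w^c=-1$.
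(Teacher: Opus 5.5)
Your proposal is correct and is essentially the paper's argument: the Fourier vectors give the eigenvalues $2\cos(2\pi k/c)$, the pairing $k\leftrightarrow -k$ modulo odd $c$ gives the multiplicities, and clearing denominators in $\cos(2\pi k/c)=-\cos(2\pi\ell/c)$ gives an even--odd contradiction modulo $2c$. One small point of phrasing: $2$ is simple just because $\ell\equiv\pm0\pmod c$ forces $\ell=0$, whereas the oddness of $c$ (via $k\equiv-k\Rightarrow k=0$) is what gives every other value multiplicity exactly two.
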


\begin{proof}
With $\omega=e^{2\pi i/c}$, the Fourier vector
$u_k=(1,\omega^k,\ldots,\omega^{(c-1)k})^T$ satisfies
\[
H_cu_k=(\omega^k+\omega^{-k})u_k
=2\cos\left(\frac{2\pi k}{c}\right)u_k.
\]
Equality of two cosine values occurs exactly for indices $k$ and $-k$
modulo $c$.  Since $c$ is odd, only $k=0$ is fixed, which gives the stated
multiplicities.  The equations producing zero or a pair of opposite cosine
values would equate an even integer with an odd integer after denominators
are cleared.  Hence $0\notin T_c$ and
$T_c\cap(-T_c)=\varnothing$.
\end{proof}

\begin{lemma}\label{lem:pc-exceptional-hollow-certificate}
Let $p\ge3$ be even and let $c\ge3$ be odd.  If $p\ge c$, then there
exist $J_p\in\mathcal S_0(P_p)$ and $H_c\in\mathcal S_0(C_c)$ such that
\[
\operatorname{nullity}
\bigl(J_p\otimes I_c-I_p\otimes H_c\bigr)=c.
\]
Consequently, $Z_-(P_p\square C_c)\ge c$.
\end{lemma}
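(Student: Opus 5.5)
Take $H_c=A(C_c)$, the adjacency matrix of Lemma~\ref{lem:odd-cycle-adjacency-spectrum}, and build $J_p$ from Lemma~\ref{lem:hollow-path-spectrum} so that its spectrum contains every element of $T_c$. Then Lemma~\ref{lem:kronecker-nullity} gives the nullity, and Corollary~\ref{cor:product-spectral-certificate} gives the bound on $Z_-$.

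First we need an admissible path spectrum. Since $p$ is even, write $p=2r$. A hollow path spectrum has the form $\{\pm\lambda_1,\ldots,\pm\lambda_r\}$ with $0<\lambda_1<\cdots<\lambda_r$. It is simple, symmetric under negation, and avoids $0$.

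By Lemma~\ref{lem:odd-cycle-adjacency-spectrum}, $T_c$ has $(c+1)/2$ elements, none of which is zero. No two elements of $T_c$ are negatives of each other. Hence the absolute values $|t|$ for $t\in T_c$ are $(c+1)/2$ distinct positive reals, because $|t|=|t'|$ with $t\ne t'$ would force $t'=-t$.

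Put the set $\{|t|:t\in T_c\}$ among the $\lambda_j$. This needs $r\ge(c+1)/2$, which is equivalent to $p\ge c+1$. Since $p$ is even and $c$ is odd, the hypothesis $p\ge c$ gives exactly $p\ge c+1$. Fill the remaining $r-(c+1)/2$ parameters with distinct positive values different from all $|t|$, $t\in T_c$, and also different from $|s|$ for every other eigenvalue $s$ of $H_c$. There are no such other eigenvalues, since $T_c$ is the whole set of distinct eigenvalues of $H_c$, so this condition costs nothing. Lemma~\ref{lem:hollow-path-spectrum} then realizes this spectrum by some $J_p\in\mathcal S_0(P_p)$, whose spectrum contains $T_c\cup(-T_c)$.

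For the nullity count, both factors are real symmetric and hence diagonalizable, so Lemma~\ref{lem:kronecker-nullity} applies. The common distinct eigenvalues are exactly $T_c$, because $\sigma(H_c)$ is supported on $T_c$. Each common eigenvalue has multiplicity $1$ in $J_p$. In $H_c$, the eigenvalue $2$ has multiplicity $1$ and each element of $T_c\setminus\{2\}$ has multiplicity $2$. The sum is
\[
1+2\cdot\frac{c-1}{2}=c,
\]
which is the full trace of multiplicities of $H_c$. Lemma~\ref{lem:kronecker-pattern} places $K=J_p\otimes I_c-I_p\otimes H_c$ in $\mathcal S_0(P_p\square C_c)$, and Corollary~\ref{cor:product-spectral-certificate} then yields $Z_-(P_p\square C_c)\ge c$.

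The only delicate point is that the path spectrum can accommodate all of $T_c$. This rests on two facts from Lemma~\ref{lem:odd-cycle-adjacency-spectrum}: $T_c\cap(-T_c)=\varnothing$ and $0\notin T_c$. Together they make the negation-closure of $T_c$ have exactly $c+1\le p$ elements without using zero, which an even-order path spectrum cannot contain. The rest is bookkeeping.
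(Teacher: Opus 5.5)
Your proposal is correct and matches the paper's proof: take $H_c=A(C_c)$, use $0\notin T_c$, $T_c\cap(-T_c)=\varnothing$ and $p\ge c+1$ to realize a simple hollow path spectrum containing $T_c\cup(-T_c)$ via Lemma~\ref{lem:hollow-path-spectrum}, then sum the multiplicities $1+2\cdot\frac{c-1}{2}=c$ using Lemma~\ref{lem:kronecker-nullity}. For the final bound you cite Corollary~\ref{cor:product-spectral-certificate} where the paper cites Lemma~\ref{lem:kronecker-pattern} with \eqref{eq:matrix-skew-bounds}; the two routes are equivalent.
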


\begin{proof}
Take $H_c=A(C_c)$ and let $T_c$ be as in
Lemma~\ref{lem:odd-cycle-adjacency-spectrum}.  Because $p$ is even, $c$ is
odd, and $p\ge c$, we have $p\ge c+1$.  The set
$T_c\cup(-T_c)$ consists of $c+1$ distinct nonzero real numbers and is
invariant under negation.  If $p>c+1$, adjoin further disjoint opposite
pairs.  Lemma~\ref{lem:hollow-path-spectrum} then provides
$J_p\in\mathcal S_0(P_p)$ with simple spectrum containing every element of
$T_c$.  Lemma~\ref{lem:kronecker-nullity} gives
\[
\operatorname{nullity}
\bigl(J_p\otimes I_c-I_p\otimes H_c\bigr)
=\sum_{\lambda\in T_c}m_{H_c}(\lambda)=c.
\]
The final assertion follows from Lemma~\ref{lem:kronecker-pattern} and
\eqref{eq:matrix-skew-bounds}.
\end{proof}

\begin{corollary}\label{cor:pc-complete-matrix-bound}
Let $p\ge2$ and $c\ge3$, and define
\[
q(p,c)=
\begin{cases}
\min\{p,c\},&c\text{ is odd},\\
\min\{2p,c\},&c\text{ is even}.
\end{cases}
\]
Then
\[
Z_-(P_p\square C_c)\ge q(p,c).
\]
\end{corollary}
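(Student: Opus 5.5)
The plan is to combine Corollary~\ref{cor:path-signed-cycle-certificate} with Lemma~\ref{lem:pc-exceptional-hollow-certificate}, splitting according to the definitions of $\eta(p,c)$ and $q(p,c)$. Throughout, Corollary~\ref{cor:path-signed-cycle-certificate} already gives $Z_-(P_p\square C_c)\ge\eta(p,c)$, so it is enough to show that $\eta(p,c)=q(p,c)$ outside one branch, and to handle that branch separately.

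First suppose $c$ is even. Then $\eta(p,c)=\min\{2p,c\}=q(p,c)$ directly, and we are done. Next suppose $c$ is odd and we are not in the branch where $p$ is even and $p\ge c$. Then $\eta(p,c)=\min\{p,c\}=q(p,c)$ by the ``otherwise'' clause of the definition of $\eta$. This case covers $p<c$ for every parity of $p$, and also $p\ge c$ with $p$ odd.

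The remaining branch is $c$ odd, $p$ even and $p\ge c$. Here $q(p,c)=\min\{p,c\}=c$, whereas the skew certificate only gives $c-1$. Since $c\ge3$ is odd and $p\ge c$, we have $p\ge4$, and in particular $p\ge3$. So the hypotheses of Lemma~\ref{lem:pc-exceptional-hollow-certificate} hold, and that lemma yields $Z_-(P_p\square C_c)\ge c=q(p,c)$.

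There is no genuine obstacle here. The only care needed is to check that the case split covers every pair $(p,c)$ with $p\ge2$ and $c\ge3$, and that the small value $p=2$ never falls into the exceptional branch. That holds because $p=2<3\le c$, so $p\ge c$ fails and the case is handled by the skew certificate.
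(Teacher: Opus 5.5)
Your proposal is correct and follows the paper's proof. The signed-cycle skew certificate gives $\eta(p,c)=q(p,c)$ outside the branch where $c$ is odd, $p$ is even and $p\ge c$, and the hollow symmetric certificate supplies the value $c$ in that branch. Your observation that $p\ge c\ge 3$ forces $p\ge 4$ there does the work of the paper's separate treatment of $p=2$.
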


\begin{proof}
If $p=2$, then the exceptional condition $p\ge c$ cannot occur, and
Corollary~\ref{cor:path-signed-cycle-certificate} gives the stated bound.
Assume henceforth that $p\ge3$.
Corollary~\ref{cor:path-signed-cycle-certificate} proves the bound unless
$c$ is odd, $p$ is even, and $p\ge c$.  In that remaining case,
$q(p,c)=c$ and Lemma~\ref{lem:pc-exceptional-hollow-certificate} applies.
\end{proof}

%% file: 04_path_path.tex
\section{Products of two paths}\label{sec:path-path}

A boundary column gives a forcing set of size $a$ for every path product.
The following construction improves this bound by one in the only
exceptional parity case.

\begin{lemma}\label{lem:path-product-checkerboard-forcing}
Let $3\le a\le b$, with $a$ odd and $b$ even.  Then
\[
Z_-(P_a\square P_b)\le a-1.
\]
\end{lemma}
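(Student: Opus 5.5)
The plan is an explicit skew zero forcing set of size $a-1$, modelled on the fact that an even path has skew zero forcing number $0$. In $P_b$ with $b$ even and vertices $v_1,\dots,v_b$, the white vertex $v_1$ forces $v_2$. Then $v_3$, whose neighbor $v_2$ is now blue, forces $v_4$, and so on, until every even-indexed vertex is blue. Then $v_b$, whose only remaining white neighbor is $v_{b-1}$, forces it, and the sweep runs back: $v_{b-2}$ forces $v_{b-3}$, and so on down to $v_1$. This forward sweep on one parity class followed by a backward sweep on the other is the ``two-sided checkerboard sweep'' I want to lift to $P_a\square P_b$.

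Label the vertices $(i,j)$, with $1\le i\le a$ for the row and $1\le j\le b$ for the column. The first candidate I would try is
\[
B=\{(i,1): i\text{ even}\}\cup\{(i,b): i\text{ even}\}.
\]
Since $a$ is odd, each boundary column contributes $(a-1)/2$ vertices, so $|B|=a-1$. This matches the checkerboard description: the blue seeds are spread over the two ends of the long direction, and the missing unit comes from the two-sided sweep. If the bookkeeping below fails, I would instead start from column $1$ minus one vertex $(r,1)$, with $r$ chosen by parity, and use the even length $b$ to close the sweep.

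The main step is to run the forcing in two phases with a checkerboard invariant. In the forward phase, I will prove by induction on $j$ that after processing column $j$, every vertex $(i,j')$ with $j'\le j$ and $i+j'$ of a fixed parity is blue. The forces come from vertices, blue or white, in column $j$ whose only white neighbor lies in column $j+1$. Within a column, the odd rows are the ones with white vertical neighbors, so at each step I must check that the relevant vertex really has exactly one white neighbor. The boundary rows $1$ and $a$, having degree at most $3$, act like the path endpoint $v_1$. At column $b$ the seeds $(i,b)$ with $i$ even complete the remaining parity class in that column. This plays the role of $v_b$ forcing $v_{b-1}$ in the path argument, and the backward phase then fills the opposite parity class column by column, from $b$ down to $1$.

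The main obstacle is the exact parity bookkeeping: verifying that at every step of both phases the designated forcing vertex has exactly one white neighbor. This is sensitive to the parity of $a$ at the rows $1$ and $a$, and to the parity of $b$ where the two sweeps meet. I would check small cases such as $(a,b)=(3,4)$ and $(3,6)$ by hand first, and adjust the seed rows, say by replacing even rows with odd rows in one of the two columns, until the invariant closes. Then I would write the induction uniformly in $j$.
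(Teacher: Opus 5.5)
Your seed set is exactly the paper's $B$. Your two-phase plan is also the paper's proof: a forward sweep colors $X=\{(i,j):i+j\text{ odd}\}$ column by column, and then a backward sweep from column $b$ colors $Y=\{(i,j):i+j\text{ even}\}$. The set works as stated, so neither the seed adjustments nor the fallback you mention is needed.

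The gap is that you never carry out the step that is the proof. You do not name the forcing vertices or check that each has a unique white neighbor, and you list exactly this as the ``main obstacle.'' It closes in one line by bipartiteness.

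\emph{Forward phase.} In stage $j$ ($1\le j\le b-1$), take as sources the vertices $(i,j)$ with $i\equiv j\pmod 2$. They lie in $Y$ and are still white, which the skew rule allows. All their neighbors lie in $X$. By induction every $X$-vertex in columns $1,\dots,j$ is blue. For $j=1$ this holds because the column-$1$ seeds, the even rows, are exactly the $X$-vertices of that column. Hence $(i,j+1)$ is the unique white neighbor of $(i,j)$.

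\emph{Backward phase.} Because $b$ is even, the column-$b$ seeds are exactly the $Y$-vertices of column $b$. In stage $j$ ($j=b,\dots,2$), take as sources the vertices $(i,j)$ with $i\equiv j-1\pmod 2$. They lie in $X$, so all their neighbors lie in $Y$. The $Y$-vertices in columns $j,\dots,b$ are blue by descending induction. So the left neighbor $(i,j-1)$ is the unique white one.

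Several of your stated worries are misplaced, and they suggest the mechanism was not yet clear to you.
\begin{itemize}
\item Rows $1$ and $a$ need no special treatment. They simply have fewer neighbors, all in the opposite class.
\item The parity of $a$ plays no role in whether the forces are legal. It only gives $|B|=a-1$.
\item The analogue of the path endpoint $v_1$ is column $1$, not row $1$.
\item Your sentence ``the odd rows are the ones with white vertical neighbors'' is backwards in column $1$. There the odd rows are the \emph{white} vertices whose vertical neighbors are all blue, and that is exactly why they can force to the right.
\end{itemize}
With the bipartite observation added, your plan becomes a complete proof along the same lines as the paper.
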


\begin{proof}
Write
\[
V(P_a\square P_b)=
\{v_{i,j}:1\le i\le a,\ 1\le j\le b\},
\]
where $v_{i,j}$ is adjacent to $v_{i',j'}$ precisely when
$|i-i'|+|j-j'|=1$.  Initially color
\[
B=\{v_{i,1}:i\equiv0\pmod2\}
\cup\{v_{i,b}:i\equiv0\pmod2\}
\]
blue.  Since $a$ is odd, $|B|=a-1$.

Let
\[
X=\{v_{i,j}:i+j\text{ is odd}\},
\qquad
Y=\{v_{i,j}:i+j\text{ is even}\}.
\]
The initial vertices in the first column are exactly the vertices of $X$
there.  For $j=1,\ldots,b-1$ in this order, perform the following forces
in any order within stage $j$:
\begin{equation}\label{eq:path-product-left-sweep}
v_{i,j}\longrightarrow v_{i,j+1}
\qquad(i\equiv j\pmod2).
\end{equation}
Before stage $j$, all vertices of $X$ in columns $1,\ldots,j$ are blue.
A displayed source lies in $Y$; its vertical neighbors and its left
neighbor, when present, lie in $X$ and are blue, while its right neighbor
is its unique white neighbor.  Thus all displayed forces are legal and
color the vertices of $X$ in column $j+1$.  Induction shows that all of
$X$ becomes blue.

Because $b$ is even, the initial vertices in the last column are exactly
the vertices of $Y$ there.  For $j=b,b-1,\ldots,2$, perform
\begin{equation}\label{eq:path-product-right-sweep}
v_{i,j}\longrightarrow v_{i,j-1}
\qquad(i\equiv j-1\pmod2).
\end{equation}
Before stage $j$, the vertices of $Y$ in columns $j,\ldots,b$ are blue.
A displayed source lies in $X$; its vertical neighbors and right neighbor,
when present, lie in $Y$ and are blue, while its left neighbor is its
unique white neighbor.  Descending induction colors all of $Y$.  Hence
$B$ is a skew zero forcing set of cardinality $a-1$.
\end{proof}

\begin{proof}[Proof of Theorem~\ref{thm:path-product}]
Put
\[
s(a,b)=a-\mathbf 1_{\{a\text{ is odd and }b\text{ is even}\}}.
\]
Corollary~\ref{cor:path-spectral-matching} gives
\[
Z_-(P_a\square P_b)\ge s(a,b).
\]

For the reverse inequality, if $a$ is odd and $b$ is even,
Lemma~\ref{lem:path-product-checkerboard-forcing} gives a skew zero forcing
set of cardinality $a-1=s(a,b)$.  In every other case, color the first
column blue.  Once columns $1,\ldots,j$ are blue with $j<b$, each $v_{i,j}$ has
$v_{i,j+1}$ as its unique white neighbor.  Hence column $j$ forces column
$j+1$, and induction colors the entire graph from $a=s(a,b)$ initial
vertices.

The two inequalities prove the formula for $Z_-$.  The formula for
$\gamma_{\mathrm{gr}}^t$ follows from
\eqref{eq:grundy-skew-identity}.
\end{proof}

\begin{corollary}\label{cor:path-product-minimum-skew-rank}
Let $2\le a\le b$, and put
\[
s(a,b)=a-\mathbf 1_{\{a\text{ is odd and }b\text{ is even}\}}.
\]
Then
\[
M_-(P_a\square P_b)=s(a,b)
\]
and
\[
\operatorname{mr}^-(P_a\square P_b)=ab-s(a,b).
\]
\end{corollary}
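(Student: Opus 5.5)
The plan is to squeeze $M_-(P_a\square P_b)$ between two bounds that both equal $s(a,b)$. The lower bound comes from the explicit certificate already built, and the upper bound comes from the forcing number just computed.

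For the lower bound, Corollary~\ref{cor:path-spectral-matching} supplies $A\in\mathcal S_-(P_a)$ and $B\in\mathcal S_-(P_b)$ such that
\[
\operatorname{nullity}(A\otimes I_b-I_a\otimes B)=s(a,b).
\]
Lemma~\ref{lem:kronecker-pattern} places the Kronecker difference $K=A\otimes I_b-I_a\otimes B$ in $\mathcal S_-(P_a\square P_b)$. Since $K$ is real, its real nullity agrees with its complex nullity. Hence $M_-(P_a\square P_b)\ge s(a,b)$.

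For the upper bound, I would use the inequality $M_-(G)\le Z_-(G)$ from \eqref{eq:matrix-skew-bounds} together with Theorem~\ref{thm:path-product}. Together these give $M_-(P_a\square P_b)\le Z_-(P_a\square P_b)=s(a,b)$. Combining the two bounds yields $M_-(P_a\square P_b)=s(a,b)$. The minimum skew rank then follows from the definition $\operatorname{mr}^-(G)=|V(G)|-M_-(G)$ with $|V(P_a\square P_b)|=ab$.

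There is no genuine obstacle here, because all the work sits in the earlier results. The one point worth checking is the case $a=2$, which is allowed here whereas Lemma~\ref{lem:path-product-checkerboard-forcing} requires $a\ge3$. Theorem~\ref{thm:path-product} already covers $a=2$: since $a$ is even, $s=a$ and the boundary-column bound applies. Corollary~\ref{cor:path-spectral-matching} is also stated for $2\le a\le b$. So the argument goes through uniformly.

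As a sanity check against parity, note that when $ab$ is even the nullity of a skew-symmetric matrix must be even. When $a$ is odd and $b$ is even, $s=a-1$ is even; when $a$ is even, $s=a$ is even. When $ab$ is odd, $s=a$ is odd, as it must be. So the value is consistent with the parity constraint.
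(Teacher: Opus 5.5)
Your proposal is correct and matches the paper's proof: the Kronecker-difference certificate from Corollary~\ref{cor:path-spectral-matching} gives $M_-(P_a\square P_b)\ge s(a,b)$, and \eqref{eq:matrix-skew-bounds} together with Theorem~\ref{thm:path-product} gives the matching upper bound, after which the formula for $\operatorname{mr}^-$ follows from its definition. Your extra checks of the case $a=2$ and of nullity parity are accurate but not needed for the argument.
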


\begin{proof}
The matrices constructed in
Corollary~\ref{cor:path-spectral-matching} give
$M_-(P_a\square P_b)\ge s(a,b)$.  On the other hand,
\eqref{eq:matrix-skew-bounds} and
Theorem~\ref{thm:path-product} give
\[
M_-(P_a\square P_b)\le Z_-(P_a\square P_b)=s(a,b).
\]
The minimum-rank formula follows from the definition of
$\operatorname{mr}^-$.
\end{proof}

%% file: 05_path_cycle.tex
\section{Products of a path and a cycle}\label{sec:path-cycle}

Use vertices
\[
v_{i,j},\qquad 0\le i\le p-1,\quad j\in\mathbb Z_c,
\]
where the first coordinate is the path coordinate, and put
\[
R_i=\{v_{i,j}:j\in\mathbb Z_c\},
\qquad
Q_j=\{v_{i,j}:0\le i\le p-1\}.
\]
Thus $R_i$ is a $C_c$-layer and $Q_j$ is a $P_p$-layer.

\begin{lemma}\label{lem:parallel-skew-round-serialization}
Suppose that, at the beginning of a stage, a finite family of ordered pairs
$(x_\alpha,y_\alpha)$ has the property that $y_\alpha$ is the unique white
neighbor of $x_\alpha$ for every $\alpha$.  Then all vertices among the
$y_\alpha$ can be colored blue by a sequential skew zero forcing process.
\end{lemma}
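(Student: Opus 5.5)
The plan is to serialize the stage by performing the forces one at a time, in any fixed order, and to show by induction that each remaining force is either still legal or no longer needed. Index the family as $(x_1,y_1),\ldots,(x_m,y_m)$. The key invariant is monotonicity: during skew zero forcing, vertices only change from white to blue and never back. So for each $x_\alpha$, the set of white neighbors at any later moment is a subset of its white neighbors at the start of the stage, and that initial set is exactly $\{y_\alpha\}$.

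We process $\alpha=1,\ldots,m$ in order. Suppose forces for indices below $\alpha$ have been carried out or skipped. There are two cases.

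\begin{itemize}
\item If $y_\alpha$ is already blue, either because it equals some earlier $y_\beta$ or because it was colored earlier, we skip index $\alpha$.
\item Otherwise $y_\alpha$ is white. By monotonicity, every other neighbor of $x_\alpha$ was blue at the start of the stage and remains blue. Hence $y_\alpha$ is the unique white neighbor of $x_\alpha$ at this moment, and $x_\alpha\to y_\alpha$ is a legal skew force.
\end{itemize}

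Here we use the skew rule: $x_\alpha$ may be of either color, so we never need $x_\alpha$ itself to be blue. After the last index, every $y_\alpha$ is blue, which proves the claim.

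I expect no real obstacle. The only points to state explicitly are the following.
\begin{itemize}
\item Several pairs may share the same target, or a target may coincide with some $x_\beta$. Such coincidences do no harm, since coloring more vertices blue never creates new white neighbors.
\item The hypothesis is evaluated once, at the beginning of the stage. This is exactly the form in which the lemma will be applied to the parallel rounds of the path--cycle constructions below.
\end{itemize}
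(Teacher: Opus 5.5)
Your proof is correct and is essentially the paper's argument: order the pairs arbitrarily, skip any pair whose target is already blue, and otherwise note that the white-neighbor set of $x_\alpha$ can only have shrunk since the start of the stage, so $y_\alpha$ is still its unique white neighbor. Your remark that the skew rule allows a white source is left implicit in the paper's proof, and it is what makes the force $x_\alpha\to y_\alpha$ legal regardless of the color of $x_\alpha$.
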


\begin{proof}
Order the pairs arbitrarily.  When $(x_\alpha,y_\alpha)$ is reached, omit
it if $y_\alpha$ is already blue.  Otherwise, earlier forces can only have
decreased the white-neighbor set of $x_\alpha$, so $y_\alpha$ remains its
unique white neighbor and $x_\alpha\to y_\alpha$ is legal.
\end{proof}

\begin{lemma}\label{lem:pc-end-cycle-layer-sweep}
One boundary cycle layer is a skew zero forcing set of
$P_p\square C_c$.  In particular,
\[
Z_-(P_p\square C_c)\le c.
\]
\end{lemma}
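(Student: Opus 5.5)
We color the boundary layer $R_0$ blue and sweep along the path coordinate, exactly as in the boundary-column argument in the proof of Theorem~\ref{thm:path-product}. The induction hypothesis is that, for some $0\le i<p-1$, all of $R_0,\ldots,R_i$ is blue and every vertex of $R_{i+1},\ldots,R_{p-1}$ is white. This holds for $i=0$ when the initial set is exactly $R_0$.

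For the inductive step, fix $j\in\mathbb Z_c$ and examine the neighbors of $v_{i,j}$ in $P_p\square C_c$. These are:
\begin{itemize}[nosep]
\item the two cycle neighbors $v_{i,j\pm1}$, which lie in $R_i$ and are blue (they are distinct since $c\ge3$, though this is not needed);
\item the lower neighbor $v_{i-1,j}$, present when $i\ge1$, which lies in $R_{i-1}$ and is blue;
\item the upper neighbor $v_{i+1,j}$, which exists because $i<p-1$ and is white.
\end{itemize}
Hence $v_{i+1,j}$ is the unique white neighbor of $v_{i,j}$.

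This holds simultaneously for all $j\in\mathbb Z_c$, so the family of pairs $(v_{i,j},v_{i+1,j})$ satisfies the hypothesis of Lemma~\ref{lem:parallel-skew-round-serialization}. That lemma converts the parallel round into a legal sequential forcing process coloring all of $R_{i+1}$. Alternatively, we can simply observe that these forces have distinct targets, and each source keeps its target as its unique white neighbor until that target is forced. After this stage $R_0,\ldots,R_{i+1}$ are blue, which completes the induction. When $i=p-1$, every vertex is blue.

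Since $|R_0|=c$, this gives $Z_-(P_p\square C_c)\le c$. There is no real obstacle here. The only point requiring care is the neighborhood check at the seam $j=c-1\sim0$, and this causes no trouble because the whole cycle layer is blue, so the cyclic wrap-around only adds blue neighbors.
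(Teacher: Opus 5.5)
Your proposal is correct and follows essentially the same route as the paper: color $R_0$, observe that each vertex of $R_i$ has its corresponding vertex in $R_{i+1}$ as its unique white neighbor, and serialize each round with Lemma~\ref{lem:parallel-skew-round-serialization} before inducting on $i$. Your explicit neighborhood check and your remark that the forces have distinct targets only spell out what the paper's proof states more tersely.
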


\begin{proof}
Color $R_0$ blue.  Every vertex of $R_0$ has its two cycle-direction
neighbors blue and has exactly one path-direction neighbor, namely the
corresponding vertex of $R_1$.  Thus $R_0$ colors $R_1$.  Once
$R_0,\ldots,R_i$ are blue, every vertex of $R_i$ has all its neighbors
blue except its corresponding neighbor in $R_{i+1}$, so $R_i$ colors
$R_{i+1}$.  Lemma~\ref{lem:parallel-skew-round-serialization} serializes
each layer, and induction colors all cycle layers.
\end{proof}

\begin{lemma}\label{lem:pc-two-layer-propagation}
If two adjacent path layers $Q_j\cup Q_{j+1}$ are blue, then all of
$P_p\square C_c$ can be colored blue.
\end{lemma}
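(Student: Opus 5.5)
The plan is to propagate around the cycle one path layer at a time. Every vertex $v_{i,j}$ has neighbors $v_{i,j\pm1}$ in the cycle direction and $v_{i\pm1,j}$ (when present) in the path direction. The inductive claim is: if $Q_{j-1}\cup Q_j$ is blue, then $Q_{j+1}$ can be colored blue. Applying this repeatedly with indices in $\mathbb Z_c$, starting from the pair $Q_j,Q_{j+1}$, colors $Q_{j+2},Q_{j+3},\ldots$ in turn. After $c-2$ steps every layer is blue.

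For the inductive step, fix $i$ and consider $v_{i,j}$. Its path-direction neighbors $v_{i\pm1,j}$ lie in $Q_j$, so they are blue. Its cycle-direction neighbor $v_{i,j-1}$ lies in $Q_{j-1}$ and is blue. Hence its only possibly white neighbor is $v_{i,j+1}$, and $v_{i,j}\to v_{i,j+1}$ is legal whenever $v_{i,j+1}$ is white. This holds simultaneously for all $0\le i\le p-1$ at the start of the stage. Lemma~\ref{lem:parallel-skew-round-serialization} then serializes these forces, so all of $Q_{j+1}$ becomes blue.

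The only point needing care is the wrap-around at the end. When $c=3$, or when $j+1$ reaches the starting pair, the target layer may already be blue. In that case we simply omit the force, as Lemma~\ref{lem:parallel-skew-round-serialization} allows. Also, the neighbors $v_{i,j\pm1}$ are distinct because $c\ge3$. So there is no real obstacle; the argument is a direct column sweep.

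Since $|Q_j\cup Q_{j+1}|=2p$, the lemma immediately gives $Z_-(P_p\square C_c)\le 2p$.
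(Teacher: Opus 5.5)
Your proof is correct and follows the paper's argument: a path layer whose two neighbors on one side (its own path-direction neighbors and one adjacent layer) are blue forces the next layer, and each stage is serialized by Lemma~\ref{lem:parallel-skew-round-serialization}. The paper expands in both cyclic directions at once, whereas you sweep in one direction for $c-2$ stages; this difference is inessential, and with exactly $c-2$ stages your wrap-around caveat never actually arises.
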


\begin{proof}
Every vertex of $Q_j$ has its path-direction neighbors and its neighbor in
$Q_{j+1}$ blue, so it can force its neighbor in $Q_{j-1}$ whenever that
neighbor is still white.  Similarly, $Q_{j+1}$ colors $Q_{j+2}$.  The
rounds can be serialized by
Lemma~\ref{lem:parallel-skew-round-serialization}.  Repeating this expansion
in both cyclic directions colors every path layer blue.
\end{proof}

Lemma~\ref{lem:pc-end-cycle-layer-sweep} gives the elementary bound $c$.
When $c$ is odd and $p<c$, however, the target value is $p<c$, so a
genuinely smaller forcing set is required.  The next lemma supplies exactly
this improvement.

\begin{lemma}\label{lem:pc-odd-step-two-forcing}
Let $2\le p<c$, where $c=2m+1$ is odd.  Then
\[
B_{p,c}=\{v_{0,\overline{2r}}:0\le r\le p-1\}
\]
is a skew zero forcing set of $P_p\square C_c$.  In particular,
\[
Z_-(P_p\square C_c)\le p.
\]
\end{lemma}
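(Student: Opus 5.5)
The plan is to verify Lemma~\ref{lem:pc-odd-step-two-forcing} by exhibiting an explicit forcing schedule that ends with the whole boundary layer $R_0$ blue. Once $R_0$ is blue, Lemma~\ref{lem:pc-end-cycle-layer-sweep} finishes the argument: each vertex of $R_i$ has its unique white neighbor in $R_{i+1}$. Throughout I will use Lemma~\ref{lem:parallel-skew-round-serialization} to run forces in parallel rounds.

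First the indexing. Since $c=2m+1$ is odd, $2$ is invertible modulo $c$, and $p<c$. Hence the residues $\overline{2r}$, $0\le r\le p-1$, are distinct, so $|B_{p,c}|=p$. It is convenient to re-index $R_0$ by $t\mapsto v_{0,\overline{2t}}$. In this re-indexing $B_{p,c}\cap R_0$ is the contiguous arc $t=0,\ldots,p-1$ of the cycle $\mathbb Z_c$ traversed in steps of two. The white vertices of $R_0$ strictly between two consecutive blue ones, namely $v_{0,\overline{2r+1}}$ for $0\le r\le p-2$, have both cycle neighbors blue. Each therefore forces its path neighbor, giving the first round $v_{0,\overline{2r+1}}\to v_{1,\overline{2r+1}}$.

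Next I would propagate this pattern by induction on the layer index, in the spirit of the checkerboard sweep of Lemma~\ref{lem:path-product-checkerboard-forcing}. The claim to establish is as follows. After processing layers $R_0,\ldots,R_k$, the layer $R_{k}$ contains a step-two arc of blue vertices of length $p-k$. Moreover, every vertex of $R_{k-1}$ adjacent to a white vertex of that arc region has only one white neighbor. Consequently the arc in $R_{k+1}$ has length $p-k-1$, shifted by one column. The blue vertices adjacent to newly blue ones then force horizontally within $R_{k+1}$, because their vertical neighbors are blue on both sides. This produces further blue vertices, which in turn force back toward the previous layer. At the far boundary $R_{p-1}$ the arc has length $1$. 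That vertex has no neighbor in $R_p$, so it is a degree-$3$ vertex with two blue neighbors, and it starts a backward sweep.

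The main obstacle is closing across the cyclic seam. The step-two arc wraps around $\mathbb Z_c$, and a consistent backward cascade must reach $R_0$ from both ends of the arc and fill the remaining $c-p$ positions of $R_0$. I would first check the smallest case $p=2$, $c=5$ by hand. Starting from $\{v_{0,0},v_{0,2}\}$, the following forces color everything: $v_{0,1}\to v_{1,1}$; then $v_{1,2}\to v_{1,3}$ and $v_{1,0}\to v_{1,4}$; then $v_{0,4}\to v_{0,3}$, which is legal because $v_{0,0}$ and $v_{1,4}$ are blue; then $v_{0,3}\to v_{0,4}$; then $v_{1,4}\to v_{1,0}$; then $v_{1,3}\to v_{1,2}$; and finally $v_{1,1}\to v_{0,1}$. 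Only the last step, $v_{1,1}\to v_{0,1}$, colors $v_{0,1}$, which completes $R_0$. From this computation I would extract the general pattern. On the far side of the seam, the cascade advances by horizontal forces inside one layer, alternating with diagonal steps back toward $R_0$. The odd parity of $c$ is what makes the two fronts coming around the cycle meet at compatible residues rather than stalling on a same-parity pair. Finally, the induction hypothesis must record exactly which vertices of each layer are blue when this closing phase begins.
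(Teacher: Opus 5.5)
Your reduction to a fully blue layer $R_0$ is valid but does no work, and the part of the lemma that actually uses $c$ odd is not proved. In general you establish only the diagonal front $\{v_{t,\overline{t+2r}}:0\le r\le p-t-1\}$ in rows $t=0,\ldots,p-1$ (the paper's $D_t$), and that part is correct.

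Everything after the front reaches $R_{p-1}$ is left to ``extracting the general pattern'' from $(p,c)=(2,5)$. Your forcing sequence there is correct, but one example is not an argument. The sketch you give for this phase also contains false steps:
\begin{itemize}
\item Horizontal forces inside $R_{k+1}$ with $k+1<p-1$ are impossible at the stage you describe. The layer $R_{k+2}$ is still entirely white, so every vertex of $R_{k+1}$ has a white lower neighbor.
\item At the bottom, the single front vertex $v_{p-1,p-1}$ has all three neighbors $v_{p-2,p-1}$, $v_{p-1,p-2}$, $v_{p-1,p}$ white, so it cannot start a sweep. The reflection is started instead by its two horizontal neighbors: $v_{p-1,p-2}\to v_{p-1,p-3}$ and $v_{p-1,p}\to v_{p-1,p+1}$. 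Each of these sources has one blue neighbor in the last front and one in $\{v_{p-2,p-2},v_{p-2,p}\}$. Your own $(2,5)$ computation does exactly this with $v_{1,2}\to v_{1,3}$ and $v_{1,0}\to v_{1,4}$.
\end{itemize}

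What is missing is an explicit invariant for the reflected wave and a check of how it closes at the seam. In the paper, for $t=p,\ldots,T=p+m-1$, round $t$ colors columns $\ell_t=2p-3-t$ and $r_t=t+1$ (mod $c$). It does so in every row $i\ge\max\{0,2p-1-t\}$ with $i\equiv 2p-1-t\pmod 2$. The forces are $v_{i,\overline{\ell_t+1}}\to v_{i,\overline{\ell_t}}$ and $v_{i,\bar t}\to v_{i,\overline{t+1}}$. They are legal for two reasons:
\begin{itemize}
\item each source's inward neighbor was colored two rounds earlier, or is an endpoint of a diagonal front;
\item its path-direction neighbors were colored in the previous round, or lie in a diagonal front.
\end{itemize}

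Oddness of $c$ enters at the last round. There $\ell_T=p-m-2\equiv p+m-1=:j_*\pmod c$ and $r_T=j_*+1$, while round $T-1$ used the same two residues with the complementary row parity. Hence the adjacent path layers $Q_{\overline{j_*}}$ and $Q_{\overline{j_*+1}}$ become entirely blue. When $p>m$, the rows $i<p-m$ are already covered by the diagonal fronts. Lemma~\ref{lem:pc-two-layer-propagation} then colors the rest.

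So the configuration to aim for is two adjacent path layers at the seam, not $R_0$. Your $(2,5)$ example already shows this ($j_*=3$ there). After $v_{0,4}\to v_{0,3}$ and $v_{0,3}\to v_{0,4}$, the layers $Q_3$ and $Q_4$ are blue, and the remainder follows from Lemma~\ref{lem:pc-two-layer-propagation}. Meanwhile $R_0$ is completed only by your very last force, so Lemma~\ref{lem:pc-end-cycle-layer-sweep} is never actually used.
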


\begin{proof}
All second coordinates below are reduced modulo $c$; an overline is used
when the reduction needs emphasis.  Since $c$ is odd, multiplication by
$2$ is a permutation of $\mathbb Z_c$.  Because $p<c$, the set $B_{p,c}$
has cardinality $p$.

First suppose that $(p,c)=(2,3)$.  Starting from
$B_{2,3}=\{v_{0,0},v_{0,2}\}$, perform
\[
 v_{0,1}\longrightarrow v_{1,1},\qquad
 v_{1,0}\longrightarrow v_{1,2},\qquad
 v_{1,2}\longrightarrow v_{1,0},\qquad
 v_{0,2}\longrightarrow v_{0,1}.
\]
At each step the displayed target is the unique white neighbor of the
source; recall that the skew color-change rule permits either a blue or a
white source.  Hence the assertion holds in this case.  Assume henceforth
that $c\ge5$.

Put $D_0=B_{p,c}$.  For $1\le t\le p-1$, define
\[
D_t=\{v_{t,\overline{t+2r}}:0\le r\le p-t-1\}.
\]
We first color $D_1,D_2,\ldots,D_{p-1}$ in this order.  If
$v_{t,\bar j}\in D_t$, use $v_{t-1,\bar j}$ as the source.  Its two
cycle-direction neighbors belong to $D_{t-1}$.  When $t\ge2$, its remaining
path-direction neighbor belongs to $D_{t-2}$; when $t=1$, that neighbor is
absent.  Hence the displayed target is the unique white neighbor of its
source at the beginning of stage $t$.  The targets are distinct, and
Lemma~\ref{lem:parallel-skew-round-serialization} serializes the stage.

For $p\le t\le T:=p+m-1$, set
\[
h_t=2p-1-t,\qquad
\ell_t=h_t-2,\qquad
r_t=t+1,\qquad
\tau_t=\max\{0,h_t\},
\]
and
\[
I_t=\{i\in\{0,\ldots,p-1\}:i\ge\tau_t,\
i\equiv h_t\pmod2\}.
\]
In round $t$, color
\begin{equation}\label{eq:pc-reflected-wave}
E_t=\{v_{i,\overline{\ell_t}},v_{i,\overline{r_t}}:i\in I_t\}.
\end{equation}
The two residues in \eqref{eq:pc-reflected-wave} are distinct.  Indeed,
\[
r_t-\ell_t=2(t-p+2),\qquad 2\le t-p+2\le m+1.
\]
Thus $4\le r_t-\ell_t\le c+1$, and the only possible positive multiple
of $c$ in this interval is $c$, which is odd whereas the displayed
difference is even.

Figure~\ref{fig:pc-four-nine-forcing} illustrates the two parts of the
construction for $P_4\square C_9$.  The $D_t$-vertices form the initial
diagonal front, while the $E_t$-vertices expand toward the cyclic seam.

\begin{figure}[htbp]
\centering
\input{fig_pc_4_9}
\caption{The forcing stages for $P_4\square C_9$.  Unlabeled vertices
are still white at the end of round $E_7$; the adjacent blue layers
$Q_7\cup Q_8$ then color the remainder by
Lemma~\ref{lem:pc-two-layer-propagation}.  Cycle edges joining columns
$8$ and $0$ are omitted from the drawing.}
\label{fig:pc-four-nine-forcing}
\end{figure}

For $t=p$, we have $I_p=\{p-1\}$.  The sources
$v_{p-1,p-2}$ and $v_{p-1,p}$ force $v_{p-1,p-3}$ and
$v_{p-1,p+1}$, respectively: their inward cycle-direction neighbors lie
in $D_{p-1}$ and their upper neighbors lie in $D_{p-2}$.

Suppose now that $t>p$ and that all earlier rounds have been performed.
Fix $i\in I_t$ and write $h=h_t$.  For the left-hand target use
\[
x=v_{i,\overline{h-1}}
\quad\text{to force}\quad
v_{i,\overline{h-2}}.
\]
The inward neighbor $v_{i,\bar h}$ is already blue: if $i=h\ge0$, it is
the left endpoint of $D_i$; otherwise it belongs to $E_{t-2}$.  Every
existing path-direction neighbor of $x$ is also blue.  The row $i+1$, when
present, belongs to $I_{t-1}$, and the row $i-1$, when present, either
belongs to $I_{t-1}$ or supplies the left endpoint of $D_{i-1}$ in the
leading case.  Since $\ell_{t-1}=h-1$, these are precisely previously
colored vertices.  Thus the left-hand target, if white, is the unique white
neighbor of $x$.

For the right-hand target use
\[
y=v_{i,\bar t}
\quad\text{to force}\quad
v_{i,\overline{t+1}}.
\]
Its inward neighbor is the right endpoint of $D_i$ when $i=h$ and otherwise
belongs to $E_{t-2}$ because $r_{t-2}=t-1$.  Its existing
path-direction neighbors were colored in $E_{t-1}$, except that in the
leading case the upper one is the right endpoint of $D_{i-1}$; here
$r_{t-1}=t$.  Hence this force is legal whenever its target is white.
Lemma~\ref{lem:parallel-skew-round-serialization} serializes the round,
with a force omitted if its target became blue earlier in that round.

It remains to check the cyclic seam.  Put $j_*=p+m-1=T$.  At the final
round,
\[
\ell_T=p-m-2\equiv j_*\pmod c,
\qquad r_T=p+m=j_*+1.
\]
Moreover, $E_{T-1}$ uses the same two residues in the opposite order.
If $p\le m$, then $I_T$ and $I_{T-1}$ are the complementary parity
classes of all rows, so $Q_{\overline{j_*}}$ and
$Q_{\overline{j_*+1}}$ are blue.

Assume $p>m$ and put $s=p-m$.  The sets $I_T$ and $I_{T-1}$ cover all
rows $i\ge s$.  For $i<s$, the two seam vertices already lie in $D_i$.
If $i\equiv s\pmod2$, then $i\le s-2$ and $D_i$ contains the integer
columns $s-2=j_*-c$ and $p+m=j_*+1$.  If
$i\not\equiv s\pmod2$, then $i\le s-1$ and $D_i$ contains
$s-1=j_*+1-c$ and $p+m-1=j_*$.
These integers have the required parity and lie between the endpoints
$i$ and $2p-2-i$ of $D_i$.  Thus the adjacent layers
$Q_{\overline{j_*}}$ and $Q_{\overline{j_*+1}}$ are blue in every case.
Lemma~\ref{lem:pc-two-layer-propagation} now colors the rest of the graph.
\end{proof}

\begin{proof}[Proof of Theorem~\ref{thm:pc-exact-skew-zero-forcing}]
Corollary~\ref{cor:pc-complete-matrix-bound} gives
\[
Z_-(P_p\square C_c)\ge q(p,c).
\]
For the reverse inequality,
Lemma~\ref{lem:pc-end-cycle-layer-sweep} gives the bound $c$, while
Lemma~\ref{lem:pc-two-layer-propagation}, applied with two adjacent
path layers initially blue, gives the bound $2p$.  Hence
\[
Z_-(P_p\square C_c)\le\min\{2p,c\}.
\]
This is $q(p,c)$ when $c$ is even, and the bound $c=q(p,c)$ applies when
$c$ is odd and $p\ge c$.  In the remaining case, $c$ is odd and $p<c$;
Lemma~\ref{lem:pc-odd-step-two-forcing} gives
$Z_-(P_p\square C_c)\le p=q(p,c)$.  Equality follows in all cases, and
the formula for $\gamma_{\mathrm{gr}}^t$ follows from
\eqref{eq:grundy-skew-identity}.
\end{proof}

\begin{corollary}\label{cor:pc-minimum-skew-rank}
Let $p\ge2$ and $c\ge3$, and define
\[
\eta(p,c)=
\begin{cases}
\min\{2p,c\},&c\text{ is even},\\
c-1,&c\text{ is odd},\ p\text{ is even},\text{ and }p\ge c,\\
\min\{p,c\},&\text{otherwise}.
\end{cases}
\]
Then
\[
M_-(P_p\square C_c)=\eta(p,c)
\]
and
\[
\operatorname{mr}^-(P_p\square C_c)=pc-\eta(p,c).
\]
In the exceptional branch where $c$ is odd, $p$ is even, and $p\ge c$,
\[
M_-(P_p\square C_c)=c-1<c=Z_-(P_p\square C_c)
\]
and
\[
M_0(P_p\square C_c)=c.
\]
\end{corollary}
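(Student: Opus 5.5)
Lower and upper bounds on $M_-(P_p\square C_c)$ and $M_0$ are combined branch by branch with Theorem~\ref{thm:pc-exact-skew-zero-forcing}. For the lower bound, Corollary~\ref{cor:path-signed-cycle-certificate} supplies $A\in\mathcal S_-(P_p)$ and $\varepsilon\in\{0,1\}$ with $\operatorname{nullity}(A\otimes I_c-I_p\otimes S_c^\varepsilon)=\eta(p,c)$. By Lemma~\ref{lem:kronecker-pattern}, since $S_c^\varepsilon\in\mathcal S_-(C_c)$, this matrix lies in $\mathcal S_-(P_p\square C_c)$, so $M_-(P_p\square C_c)\ge\eta(p,c)$.

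For the upper bound, we use $M_-\le Z_-=q(p,c)$, from \eqref{eq:matrix-skew-bounds} and Theorem~\ref{thm:pc-exact-skew-zero-forcing}. Comparing the definitions, $\eta(p,c)=q(p,c)$ in every branch except the one where $c$ is odd, $p$ is even and $p\ge c$. There $q=c$ and $\eta=c-1$, so equality $M_-=\eta$ follows immediately outside that branch.

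In the exceptional branch we need $M_-\le c-1$. The order $pc$ is even because $p$ is even. A real skew-symmetric matrix has even rank, since its nonzero eigenvalues come in pairs $\pm i\lambda$ (equivalently, its rank equals that of some nonsingular principal submatrix, which has even order). Hence its nullity has the same parity as $pc$ and is even. Since $c$ is odd, $M_-\le Z_-=c$ sharpens to $M_-\le c-1$, and combined with the lower bound this gives $M_-=c-1<c=Z_-$.

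For $M_0$, Lemma~\ref{lem:pc-exceptional-hollow-certificate} gives $M_0\ge c$; this lemma needs $p\ge3$, which holds because $p\ge c\ge3$. Also $M_0\le Z_-=c$ by \eqref{eq:matrix-skew-bounds}, so $M_0=c$. Finally, $\operatorname{mr}^-=pc-M_-$ follows from the definition.

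The only non-bookkeeping step is the parity argument for skew-symmetric nullity, which is standard. The main care needed is in checking that the branch conditions of $\eta$ and $q$ line up exactly, including the boundary cases $c$ even with $2p$ versus $c$, and $p=2$.
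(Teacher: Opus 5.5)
Your proposal is correct and follows the paper's proof essentially step for step. It obtains $M_-\ge\eta(p,c)$ from Corollary~\ref{cor:path-signed-cycle-certificate}, and the upper bound from $M_-\le Z_-=q(p,c)$, using $\eta=q$ off the exceptional branch and the even-nullity parity argument to sharpen the bound to $c-1$ on that branch. It then gets $M_0=c$ from Lemma~\ref{lem:pc-exceptional-hollow-certificate} together with $M_0\le Z_-$. The extra checks you supply are details the paper leaves implicit: membership in $\mathcal S_-(P_p\square C_c)$ via Lemma~\ref{lem:kronecker-pattern}, and $p\ge3$ in the exceptional branch.
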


\begin{proof}
Corollary~\ref{cor:path-signed-cycle-certificate} constructs a matrix in
$\mathcal S_-(P_p\square C_c)$ of nullity $\eta(p,c)$, so
$M_-(P_p\square C_c)\ge \eta(p,c)$.  Outside the exceptional branch,
$\eta(p,c)=q(p,c)$, and Theorem~\ref{thm:pc-exact-skew-zero-forcing} together
with \eqref{eq:matrix-skew-bounds} gives the reverse inequality.

In the exceptional branch, $pc$ is even, so every real skew-symmetric
matrix of order $pc$ has even nullity.  Since
$Z_-(P_p\square C_c)=c$ is odd, \eqref{eq:matrix-skew-bounds} gives
$M_-(P_p\square C_c)\le c-1=\eta(p,c)$.  Finally,
Lemma~\ref{lem:pc-exceptional-hollow-certificate} gives $M_0\ge c$, while
\eqref{eq:matrix-skew-bounds} gives $M_0\le Z_-=c$.  The formula for
$\operatorname{mr}^-$ follows from its definition.
\end{proof}

%% file: fig_pc_4_9.tex
\begin{tikzpicture}[
  x=0.86cm,
  y=0.78cm,
  vertex/.style={circle,draw=black!55,fill=white,minimum size=6.3mm,
    inner sep=0pt},
  dvertex/.style={vertex,fill=black!14,font=\scriptsize},
  evertex/.style={vertex,fill=blue!16,font=\scriptsize}
]
  \fill[blue!5] (6.55,0.48) rectangle (8.45,-3.48);

  \foreach \i in {0,...,3}{
    \foreach \j in {0,...,7}{
      \draw[black!28] (\j,-\i)--(\j+1,-\i);
    }
  }
  \foreach \i in {0,...,2}{
    \foreach \j in {0,...,8}{
      \draw[black!28] (\j,-\i)--(\j,-\i-1);
    }
  }

  \foreach \i in {0,...,3}{
    \foreach \j in {0,...,8}{
      \node[vertex] at (\j,-\i) {};
    }
  }

  \foreach \j in {0,2,4,6}{
    \node[dvertex] at (\j,0) {$D_0$};
  }
  \foreach \j in {1,3,5}{
    \node[dvertex] at (\j,-1) {$D_1$};
  }
  \foreach \j in {2,4}{
    \node[dvertex] at (\j,-2) {$D_2$};
  }
  \node[dvertex] at (3,-3) {$D_3$};

  \foreach \j in {1,5}{
    \node[evertex] at (\j,-3) {$E_4$};
  }
  \foreach \j in {0,6}{
    \node[evertex] at (\j,-2) {$E_5$};
  }
  \foreach \i in {1,3}{
    \foreach \j in {7,8}{
      \node[evertex] at (\j,-\i) {$E_6$};
    }
  }
  \foreach \i in {0,2}{
    \foreach \j in {7,8}{
      \node[evertex] at (\j,-\i) {$E_7$};
    }
  }

  \foreach \j in {0,...,8}{
    \node[font=\scriptsize,above=4pt] at (\j,0) {$\j$};
  }
  \foreach \i in {0,...,3}{
    \node[font=\scriptsize,left=5pt] at (0,-\i) {$R_{\i}$};
  }
  \node[font=\scriptsize,blue!55!black] at (7.5,-3.75)
    {$Q_7\cup Q_8$};
\end{tikzpicture}

%% file: 06_cycle_cycle.tex
\section{Products of two cycles}\label{sec:cycle-cycle}

Throughout this section, let $3\le a\le b$.  Theorems~5.1(i)
and~5.2(iii) of Bre\v{s}ar et al.~\cite{BresarEtAlProductGraphs},
together with \eqref{eq:grundy-skew-identity}, give
\begin{equation}\label{eq:known-cc-interval}
 a\le Z_-(C_a\square C_b)\le2a.
\end{equation}
The upper bound comes from two adjacent $C_a$-columns.  We cite that
standard construction and give explicit forcing sets only when fewer than
$2a$ initial vertices are needed.

\subsection{Forcing sets below two columns}

\begin{lemma}\label{lem:odd-even-half-layer-forcing}
Let $o\ge3$ be odd and let $e\ge4$ be even.  Then
\[
 Z_-(C_o\square C_e)\le e.
\]
\end{lemma}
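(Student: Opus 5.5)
The plan is to exhibit an explicit forcing set of size $e$ made of two alternating half-layers. Label the vertices $v_{i,j}$ with $i\in\mathbb Z_o$ (the $C_o$-coordinate) and $j\in\mathbb Z_e$ (the $C_e$-coordinate). The rows $R_i=\{v_{i,j}:j\in\mathbb Z_e\}$ are then $C_e$-layers. Take
\[
B=\{v_{0,j}:j\text{ even}\}\cup\{v_{1,j}:j\text{ odd}\},
\]
which has $|B|=e$. Because $e$ is even, the parity of $j$ is well defined on $\mathbb Z_e$. Hence, on any window of rows labelled by consecutive integers, the class $X=\{v_{i,j}:i+j\text{ even}\}$ is well defined, and $B$ is exactly $X\cap(R_0\cup R_1)$.

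First phase: a two-sided checkerboard sweep in the style of Lemma~\ref{lem:path-product-checkerboard-forcing}. Suppose $X$ is blue on the integer rows $-k,\ldots,k+1$. Then a vertex $v_{k+1,j}\notin X$, blue or white, has its two horizontal neighbours in $X\cap R_{k+1}$ and its lower neighbour $v_{k,j}\in X$. So its unique white neighbour is $v_{k+2,j}$, and it forces it. Symmetrically, $v_{-k,j}\notin X$ forces $v_{-k-1,j}$. Each round is serialized by Lemma~\ref{lem:parallel-skew-round-serialization}, and induction extends the blue set to $X$ on rows $-k,\ldots,k+1$ for every $k$ with $2k+2\le o+1$. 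At $k=(o-1)/2$ the integer rows $k+1$ and $-k$ represent the same residue. Because $o$ is odd, the two labels assign opposite parity classes to this row, so it becomes entirely blue. Every other residue row carries one parity half. As integer labels run $-k,\ldots,k+1$, consecutive rows carry alternating halves, and the fully blue row $R_r$ sits at the seam between the two fronts.

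Second phase: the cleanup. With $R_r$ full and its neighbours $R_{r\pm1}$ each half blue, take a white vertex of $R_{r+1}$ outside its blue half. Its horizontal neighbours and its neighbour in $R_r$ are blue, so it forces its neighbour in $R_{r+2}$ whenever that vertex is white. By the alternation, this colours the missing half of $R_{r+2}$, and $R_{r+2}$ becomes full. I would then try to iterate: full rows are created two steps apart, running in both directions from the seam, until the full rows are adjacent modulo $o$. Once two adjacent rows are completely blue, the argument of Lemma~\ref{lem:pc-two-layer-propagation}, with the roles of the factors exchanged, colours everything.

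The main obstacle is this second phase. Directly after $R_{r+2}$ fills, the intermediate row $R_{r+1}$ still has a white half whose vertices see two white vertices in the parity pattern, so the naive sweep stalls. I would first reorganize the forces at the seam so that the two fronts close with two adjacent full rows, rather than a single full row, in the spirit of the seam analysis in Lemma~\ref{lem:pc-odd-step-two-forcing}. Concretely, I would stop one front one round early and use the oddness of $o$ to place the two opposite-parity halves on adjacent residue rows. If that fails, I would change $B$, for instance by shifting which half sits in $R_1$, and check small cases such as $C_3\square C_4$ and $C_5\square C_4$ by hand to fix the correct seam rule before writing the general induction.
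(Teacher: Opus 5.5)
\textbf{There is a genuine gap in the second phase.} Your initial set is exactly the paper's $X_0\cup X_1$, and your first phase is correct: after $(o-1)/2$ rounds the row $R_r$, $r=\overline{(o+1)/2}$, is full and every other row is half blue.

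The one step you actually write down in the second phase is false. If $v_{r+1,j}$ is white, then $v_{r+2,j}$ is \emph{already blue}. The reason is that $R_{r+1}$ and $R_{r+2}$ carry consecutive window labels, so the white half of $R_{r+1}$ occupies the same columns as the blue half of $R_{r+2}$. Hence $v_{r+1,j}$ has no white neighbour at all, and nothing in the missing half of $R_{r+2}$ gets coloured. For example, in $C_5\square C_4$ after your first phase, row $3$ is full, rows $4$ and $0$ are blue on columns $\{1,3\}$ and $\{0,2\}$, and $v_{4,0}$ has four blue neighbours. The stall you then diagnose comes from this misstep. The remedies you list (stopping a front early, changing $B$, checking small cases) are plans rather than an argument, so as written the proposal does not prove the lemma.

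The missing observation is that you never needed to stop. Your inductive step uses only that the labelled half-rows $k$ and $k+1$ (respectively $-k$ and $-k+1$) are blue. If you read ``$v_{k+1,j}\notin X$'' as ``$k+1+j$ is odd'', the step remains valid for every $k$, whether or not the labels wrap around.

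The first extra round is simply the full row forcing. The rows $R_{r-1}$ and $R_{r+1}$ have labels $(o-1)/2$ and $-(o-3)/2$, which differ by the odd number $o-2$. So each $v_{r,j}$ has blue horizontal neighbours and exactly one white vertical neighbour, which it forces; this completes $R_{r\pm1}$.

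Continuing to $k=o-1$, the labels $-(o-1),\dots,o$ cover all $2o$ half-layers, because labels $i$ and $i+o$ give the same row with opposite column parity. This is essentially the paper's argument. The only difference is that the paper runs a single front in one direction through $X_2,\dots,X_{2o-1}$, going twice around the odd cycle, and so never meets a seam at all.
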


\begin{proof}
Index the vertices as
\[
 v_{i,j},\qquad i\in\mathbb Z_o,\quad j\in\mathbb Z_e.
\]
For every integer $t$, let $\overline t$ denote its residue modulo $o$ and
define
\[
 X_t=\{v_{\overline t,j}:t+j\equiv0\pmod2\}.
\]
Each $X_t$ has $e/2$ vertices.  Initially color
$B=X_0\cup X_1$, so $|B|=e$.

We claim successively that $X_2,X_3,\ldots,X_{2o-1}$ can be colored.
Suppose that $1\le t\le2o-2$ and that $X_0,\ldots,X_t$ are blue.  For
each $j$ satisfying $t+j\equiv1\pmod2$, use
$v_{\overline t,j}$ as a forcing vertex.  Its two $C_e$-neighbors lie in
$X_t$, its neighbor $v_{\overline{t-1},j}$ lies in $X_{t-1}$, and its
remaining neighbor $v_{\overline{t+1},j}$ lies in $X_{t+1}$.
The sets $X_0,\ldots,X_{2o-1}$ are pairwise disjoint: two can occupy the
same row only when their indices differ by $o$, and this odd shift
interchanges the two column parities.  Hence
\[
 v_{\overline t,j}\longrightarrow v_{\overline{t+1},j}.
\]
The forces for fixed $t$ have distinct targets and may be serialized by
Lemma~\ref{lem:parallel-skew-round-serialization}.

For each row, the two sets among $X_0,\ldots,X_{2o-1}$ supported on that
row have complementary column parities.  These $2o$ half-layers therefore
partition $V(C_o\square C_e)$, so $B$ is a skew zero forcing set.
\end{proof}

\begin{lemma}\label{lem:odd-odd-closed-diagonal-forcing}
Let $3\le a<b$ be odd.  Then
\[
 Z_-(C_a\square C_b)\le b.
\]
\end{lemma}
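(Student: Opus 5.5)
The plan is to exhibit an explicit forcing set of size $b$ whose closure contains two adjacent $C_a$-columns, and then finish with the standard two-column sweep. That sweep is the construction behind the upper bound $2a$ in \eqref{eq:known-cc-interval}, i.e.\ the toroidal analogue of Lemma~\ref{lem:pc-two-layer-propagation}. Index the vertices as $v_{i,j}$ with $i\in\mathbb Z_a$ and $j\in\mathbb Z_b$, and write $Q_j=\{v_{i,j}:i\in\mathbb Z_a\}$. Since $a<b$, any set of $b$ vertices has room for roughly one vertex per column $Q_j$ plus an excess of $b-a$. As the lemma's name suggests, I would take a \emph{closed diagonal}: a set of the form $B=\{v_{\overline{\phi(j)},j}:j\in\mathbb Z_b\}$, one vertex in each column, where $\phi$ advances by $\pm1$ from column to column and returns to its starting row after $b$ steps. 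Because $a$ and $b$ are both odd and $a<b$, a closed $\pm1$ walk of length $b$ around $\mathbb Z_a$ exists. For example, go down $a$ steps once around the $C_a$-direction, then zigzag up and down for the remaining $b-a$ steps (an even number).

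The forcing dynamics should then imitate Lemma~\ref{lem:pc-odd-step-two-forcing}, with the oddness of $a$ playing the role there of ``multiplication by $2$ is a permutation''. First, along the portion of $B$ that is a genuine diagonal $v_{\bar t,t}$, the white vertex $v_{\overline{t+1},t}$ has the two blue neighbors $v_{\bar t,t}$ and $v_{\overline{t+1},t+1}$. It therefore cannot force on its own. Instead I would use the zigzag portion, where a white vertex sitting between two blue diagonal vertices and above or below a third has exactly one white neighbor. This starts a secondary diagonal front, exactly as $D_1,D_2,\ldots$ were generated in the path--cycle proof. Second, I would show by induction on rounds that these fronts fill successive diagonals $\{v_{\overline{t+s},t}\}$ for $s=1,2,\ldots$. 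Each new diagonal is forced by the previous one, using the fact that its sources already have three blue neighbors. Lemma~\ref{lem:parallel-skew-round-serialization} serializes each round. Third, after at most $a-1$ such shifts, every vertex in two consecutive columns $Q_{j_*}$ and $Q_{j_*+1}$ is blue. From there, each vertex of $Q_{j_*}$ forces its neighbor in $Q_{j_*-1}$, and likewise $Q_{j_*+1}$ forces $Q_{j_*+2}$, sweeping both ways around the $C_b$-direction. This gives $Z_-(C_a\square C_b)\le |B|=b$.

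The main obstacle is the seam. On the torus there is no boundary row to anchor the first forces, unlike Lemma~\ref{lem:pc-odd-step-two-forcing}, where the missing neighbor in row $0$ made the initial forces legal. So the only source of ``missing'' white neighbors is the turning structure of the closed diagonal itself. I would first try the simplest shape: one full descent of length $a$ followed by a zigzag of length $b-a$. I would then check by hand, on $C_3\square C_5$ and $C_5\square C_7$, that the zigzag region launches fronts that wrap around $C_a$ and close consistently. The oddness of both $a$ and $b$ should be exactly what makes the two wave fronts meet with complementary parities at the seam, as in the final parity check of Lemma~\ref{lem:pc-odd-step-two-forcing}. If that shape stalls, I would adjust where the turns occur, or replace the zigzag by a second diagonal of opposite slope closing the loop. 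The general verification would then be an induction on the round index, modeled on the $h_t,\ell_t,r_t$ bookkeeping of that earlier proof.
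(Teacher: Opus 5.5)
Your initial set and your local move are the right ones. The paper also starts from a closed $\pm1$ height profile $B=\{v_{h_j,j}:j\in\mathbb Z_b\}$. It uses the cyclic word $D^{a+s}U^s$ with $b=a+2s$, which is essentially your fallback of a descent followed by an ascent of opposite slope. Its only force is exactly yours: if $h_{j-1}=h_{j+1}=r$ and $h_j=r+1$, the white vertex $v_{r,j}$ has three blue neighbors and forces $v_{r-1,j}$.

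The gap is the global step, and the induction you sketch does not describe what actually happens. The forced vertex lies two rows away from the profile vertex in its column, so the move changes a height by $\pm2$, never by $\pm1$. Initially the only legal forces are at turns, so no vertex $v_{\overline{t+1},t}$ in the interior of a diagonal stretch can be forced at the start. Hence the fronts are not the diagonals $\{v_{\overline{t+s},t}\}$, $s=1,2,\ldots$, each forced by its predecessor. The first fronts are translates of the profile by $\mp2$, and odd offsets are reached by iterating double shifts.

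More seriously, you give no reason why the forcing continues beyond the first turns and closes up across the seam. You yourself call this the main obstacle, and you defer it to hand checks and a possible redesign of the shape. The suggestion that oddness makes two fronts ``meet with complementary parities'' does not correspond to the actual mechanism.

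The missing idea is an invariant. After the move, the blue set still contains a closed $\pm1$ profile, namely the one whose cyclic word has that $DU$ replaced by $UD$. So turns can be pushed around indefinitely, each flip moving a turn to a neighboring column. The paper does this in three stages:
\begin{enumerate}[label=(\roman*)]
\item move the leftmost $U$ of $D^{a+s}U^s$ across the whole $D$-block;
\item move the last $D$ across the remaining $U$'s;
\item flip the $DU$ pair that now straddles the column seam.
\end{enumerate}
Every column is flipped exactly once, so the word is restored with every height lowered by $2$. Thus the closure contains $B-2$, and hence $B-2k$ for every $k$. Since $a$ is odd, $2$ generates $\mathbb Z_a$, and these translates cover every vertex. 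This is the only use of the oddness of $a$. The oddness of $b$ is needed only, as you correctly noted, for a closed walk of length $b$ with net displacement $a$ to exist.

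With this invariant you need neither two adjacent full columns, nor a separate seam analysis, nor any $h_t,\ell_t,r_t$-style bookkeeping. The same argument also handles your descent-plus-zigzag profile, because every cyclic word containing both letters admits such a one-flip-per-column cycle.
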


\begin{proof}
Write $b=a+2s$, where $s\ge1$, and index the vertices as
\[
 v_{i,j},\qquad i\in\mathbb Z_a,\quad j\in\mathbb Z_b.
\]
Let $D$ increase the first coordinate by one and let $U$ decrease it by
one.  The cyclic word
\[
 W=D^{a+s}U^s
\]
is read from column $j$ to column $j+1$: the letter $D$ means
$h_{j+1}=h_j+1$, and the letter $U$ means $h_{j+1}=h_j-1$ in
$\mathbb Z_a$.  Its net displacement is $a$, which is zero modulo $a$.
It therefore determines a closed height profile
$h=(h_0,\ldots,h_{b-1})\in\mathbb Z_a^b$.  Fix $h_0=0$ and initially
color
\[
 B=\{v_{h_j,j}:j\in\mathbb Z_b\}.
\]

We use one local move.  Suppose that the profile contains an adjacent
pair $DU$, with
\[
 h_{j-1}=r,\qquad h_j=r+1,\qquad h_{j+1}=r.
\]
The vertex $v_{r,j}$ has the three blue neighbors
$v_{r,j-1}$, $v_{r+1,j}$, and $v_{r,j+1}$.  Thus it forces
$v_{r-1,j}$ if that vertex is white; if it is already blue, no force is
needed.  In either case, the blue closure contains the profile obtained
by replacing $DU$ with $UD$.

Move the leftmost $U$ across the initial block of $D$'s, and then move
the last $D$ of the resulting $D$-block across the remaining $U$'s:
\[
 D^{a+s}U^s
 \ \rightsquigarrow\
 U D^{a+s}U^{s-1}
 \ \rightsquigarrow\
 U D^{a+s-1}U^{s-1}D.
\]
The last $D$ and the first $U$ now form a $DU$ pair across the column
seam.  Switching that pair restores the word $W$ and lowers the height
in column $0$ by two.  Hence every height is lowered by two, and the blue
closure contains
\[
 B-2=\{v_{h_j-2,j}:j\in\mathbb Z_b\}.
\]
Repeating the same switches gives $B-2k$ for every $k\ge0$.  Since $a$
is odd, the residues $h_j-2k$, $0\le k<a$, run through
$\mathbb Z_a$ in every fixed column $j$.  Their union is the whole
graph, so $B$ is a skew zero forcing set of size $b$.
\end{proof}

\begin{corollary}\label{cor:cycle-forcing-upper-bounds}
Let $3\le a<b$.  If $a$ is odd, then
\[
 Z_-(C_a\square C_b)\le\min\{b,2a\}.
\]
If $a$ is even and $b$ is odd, then
\[
 Z_-(C_a\square C_b)\le a.
\]
\end{corollary}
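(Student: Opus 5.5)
This corollary only combines the two forcing lemmas just proved with the known upper bound $2a$ from \eqref{eq:known-cc-interval}; no new forcing argument is needed. The plan is to split into the two stated parity cases and, in each, choose the right lemma and check that its hypotheses hold for the ordered pair $(a,b)$.

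\emph{First case: $a$ odd and $a<b$.} Inequality \eqref{eq:known-cc-interval} already gives $Z_-(C_a\square C_b)\le2a$, coming from two adjacent $C_a$-columns. It remains to show $Z_-(C_a\square C_b)\le b$.
\begin{itemize}
\item If $b$ is odd, then $3\le a<b$ are both odd. Lemma~\ref{lem:odd-odd-closed-diagonal-forcing} applies directly and gives the bound $b$.
\item If $b$ is even, then $b\ge4$. Apply Lemma~\ref{lem:odd-even-half-layer-forcing} with $o=a$ and $e=b$. That lemma bounds $Z_-(C_o\square C_e)$ by $e=b$. Since $C_a\square C_b$ is this graph $C_o\square C_e$, we get $Z_-\le b$.
\end{itemize}
Taking the smaller of $b$ and $2a$ gives the bound $\min\{b,2a\}$.

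\emph{Second case: $a$ even and $b$ odd.} Here $a\ge4$ and $b\ge5$ is odd. Apply Lemma~\ref{lem:odd-even-half-layer-forcing} with $o=b$ and $e=a$. The bound it yields is $e=a$. Because the Cartesian product is symmetric, $C_o\square C_e=C_b\square C_a\cong C_a\square C_b$, so $Z_-(C_a\square C_b)\le a$.

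The only point that needs care is matching the roles in Lemma~\ref{lem:odd-even-half-layer-forcing}. Its bound is the order of the \emph{even} cycle, whatever the relative sizes of the two cycles. In the first case with $b$ even, the even cycle is the longer one, giving $b$. In the second case the even cycle is the shorter one, giving $a$. No other obstacle arises.
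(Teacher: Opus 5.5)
Your proposal is correct and follows the paper's proof essentially verbatim. For odd $a$ you combine Lemma~\ref{lem:odd-odd-closed-diagonal-forcing} (odd $b$) or Lemma~\ref{lem:odd-even-half-layer-forcing} with $(o,e)=(a,b)$ (even $b$) with the known bound $2a$; for even $a$ and odd $b$ you apply the half-layer lemma with $(o,e)=(b,a)$ through $C_a\square C_b\cong C_b\square C_a$, and your explicit checks that $b\ge4$ and $a\ge4$ so the lemma's hypotheses hold are a small welcome addition.
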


\begin{proof}
Suppose first that $a$ is odd.  If $b$ is odd, use
Lemma~\ref{lem:odd-odd-closed-diagonal-forcing}; if $b$ is even, use
Lemma~\ref{lem:odd-even-half-layer-forcing} with $(o,e)=(a,b)$.
Combine the resulting bound $b$ with the known bound $2a$ in
\eqref{eq:known-cc-interval}.  If $a$ is even and $b$ is odd, use the
natural isomorphism $C_a\square C_b\cong C_b\square C_a$ and apply the
half-layer lemma with $(o,e)=(b,a)$.
\end{proof}

\subsection{A cyclic column-defect lemma}

For a finite legal prefix $\mathcal S=(v_1,\ldots,v_r)$, let
$S=\{v_1,\ldots,v_r\}$ and put
\[
 \delta(S)=|N(S)|-|S|,
 \qquad N(S)=\bigcup_{v\in S}N_G(v).
\]
If $F_i=N(v_i)\setminus\bigcup_{h<i}N(v_h)$ is the set footprinted at
step $i$, then
\begin{equation}\label{eq:defect-footprint-excess}
 \delta(S)=\sum_{i=1}^r(|F_i|-1).
\end{equation}
Thus the defect is exactly the cumulative excess beyond the one new
neighbor required for legality.  This elementary bookkeeping is implicit
in standard footprint arguments~\cite{BresarHenningRallTotalSequences};
the next lemma packages it columnwise and may be viewed as a refinement of
the arc-counting method used for Cartesian products in
\cite[Theorem~5.1]{BresarEtAlProductGraphs}.

\begin{lemma}\label{lem:cyclic-column-defect}
Let $q\ge2$ and $b\ge3$.  Suppose that integers $t_j,u_j$, indexed by
$j\in\mathbb Z_b$, satisfy
\[
 t_0=0,\qquad 1\le t_j\le q\quad(j\ne0),
\]
and
\begin{equation}\label{eq:cyclic-column-local-bound}
 0\le u_j\le
 \min\{t_{j-1},t_{j+1},t_j-\mathbf1_{\{0<t_j<q\}}\}.
\end{equation}
Then
\begin{equation}\label{eq:cyclic-column-defect-bound}
 \sum_{j\in\mathbb Z_b}(t_j-u_j)
 \ge\min\{b-1,2q\}.
\end{equation}
Moreover, if $b\le2q$ and
$\sum_j(t_j-u_j)=b-1$, then
\[
 t_1=t_{b-1}=1.
\]
\end{lemma}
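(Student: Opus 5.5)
The plan is to bound each column term $d_j:=t_j-u_j$ from below using \eqref{eq:cyclic-column-local-bound}, and then split into cases according to how many columns reach the maximal height $q$.

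\textbf{Local bounds.} First, $d_0=0$, because $0\le u_0\le t_0=0$. For $j\ne0$, \eqref{eq:cyclic-column-local-bound} gives three facts:
\[
 d_j\ge\max\{0,\ t_j-t_{j-1},\ t_j-t_{j+1}\},
\]
$d_j\ge1$ whenever $t_j<q$, and $d_1\ge t_1$, $d_{b-1}\ge t_{b-1}$ since $t_0=0$. All later steps use only these inequalities.

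\textbf{Case 1: at most one $j\ne0$ has $t_j=q$.}
\begin{itemize}
\item If there is no such $j$, every $d_j$ with $j\ne0$ is at least $1$, so $\sum_j d_j\ge b-1$.
\item If there is exactly one such $k$, its two neighbours $k\pm1$ have height $<q$; this includes the possibility that a neighbour is $0$. Hence $d_k\ge q-\max\{t_{k-1},t_{k+1}\}\ge1$. Every other $j\ne0$ also has $d_j\ge1$, so again $\sum_jd_j\ge b-1$.
\end{itemize}

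\textbf{Case 2: at least two indices attain $q$.} Let $k<k'$ be the first and last indices in $\{1,\dots,b-1\}$ with $t_k=t_{k'}=q$. On the left segment $1,\dots,k$, use $d_j\ge(t_j-t_{j-1})^+$. Telescoping gives
\[
 \sum_{j=1}^{k}d_j\ \ge\ t_k-t_0=q.
\]
On the right segment $k',\dots,b-1$, use $d_j\ge(t_j-t_{j+1})^+$ with $t_b=t_0=0$, which gives $\sum_{j=k'}^{b-1}d_j\ge q$. The two segments are disjoint because $k<k'$, and all $d_j\ge0$. Therefore $\sum_jd_j\ge2q$.

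The two cases together give $\sum_j d_j\ge\min\{b-1,2q\}$, which is \eqref{eq:cyclic-column-defect-bound}. The only delicate point is avoiding double-counting the peak column. Choosing the first and last maxima handles this: when they coincide, the unique-peak argument of Case 1 applies instead of telescoping.

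\textbf{Equality statement.} Assume $b\le2q$ and $\sum_jd_j=b-1$. Case 2 would give $\sum_jd_j\ge2q\ge b>b-1$, so we are in Case 1. There every $d_j$ with $j\ne0$ is at least $1$, and the $b-1$ such terms sum to $b-1$, so each equals $1$. Then $1\le t_1\le d_1=1$ and $1\le t_{b-1}\le d_{b-1}=1$. Since $b\ge3$, the indices $1$ and $b-1$ are distinct, so both conclusions hold, giving $t_1=t_{b-1}=1$.
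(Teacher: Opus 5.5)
Your proof is correct and follows the paper's argument essentially step for step. The paper phrases everything through the charge $c_j=t_j-\min\{t_{j-1},t_{j+1},t_j-\mathbf1_{\{0<t_j<q\}}\}\le t_j-u_j$ instead of your $d_j$, and then uses the same split on whether height $q$ occurs at most once or at least twice, with first/last occurrences, telescoping, and the equality clause read off from $c_1=t_1$, $c_{b-1}=t_{b-1}$ exactly as you do.
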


\begin{proof}
Put $\varepsilon_j=\mathbf1_{\{0<t_j<q\}}$ and
\[
 c_j=t_j-\min\{t_{j-1},t_{j+1},t_j-\varepsilon_j\}.
\]
By \eqref{eq:cyclic-column-local-bound},
\begin{equation}\label{eq:cyclic-column-charge-sum}
 \sum_j(t_j-u_j)\ge\sum_jc_j.
\end{equation}
Cut the cyclic height sequence at $t_0=0$ and write $t_b=t_0$.
If height $q$ occurs at most once among $t_1,\ldots,t_{b-1}$, then
$c_j\ge1$ for every $1\le j\le b-1$: this follows from
$\varepsilon_j=1$ below height $q$, while a unique term of height $q$
has a neighbor of smaller height.  Hence $\sum_jc_j\ge b-1$.

If height $q$ occurs at least twice, let $p<s$ be its first and last
occurrences.  Since
\[
 c_j\ge\max\{0,t_j-t_{j-1},t_j-t_{j+1}\},
\]
the total charge on $1,\ldots,p$ is at least $q$, and that on
$s,\ldots,b-1$ is also at least $q$.  The two intervals are disjoint,
so $\sum_jc_j\ge2q$.  This proves
\eqref{eq:cyclic-column-defect-bound}.

Finally, assume that $b\le2q$ and $\sum_j(t_j-u_j)=b-1$.  Two terms of
height $q$ would give $\sum_jc_j\ge2q>b-1$, so height $q$ occurs at most
once.  Consequently every nonzero-column charge is at least one, and
equality in \eqref{eq:cyclic-column-charge-sum} gives
$c_1=c_{b-1}=1$.  Because $t_0=0$, we have
$c_1=t_1$ and $c_{b-1}=t_{b-1}$, as required.
\end{proof}

\begin{proposition}\label{prop:odd-shorter-cycle-lower-bound}
Let $3\le a<b$, with $a$ odd, and put $\tau=\min\{b,2a\}$.  Then
\[
 \gamma_{\mathrm{gr}}^t(C_a\square C_b)\le ab-\tau,
 \qquad
 Z_-(C_a\square C_b)\ge \tau.
\]
\end{proposition}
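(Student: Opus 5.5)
We bound $\gamma_{\mathrm{gr}}^t$ from above and then read off the bound on $Z_-$ from \eqref{eq:grundy-skew-identity}, so it suffices to show $|S|\le ab-\tau$ for every Grundy total dominating sequence $\mathcal S$. Index the vertices of $G=C_a\square C_b$ as $v_{i,j}$ with $i\in\mathbb Z_a$ and $j\in\mathbb Z_b$, and call $\{v_{i,j}:i\in\mathbb Z_a\}$ column $j$; it is a $C_a$-layer of $a$ vertices. Since $\mathcal S$ has maximum length, $N(S)=V(G)$, hence $ab-|S|=\delta(S)$. By \eqref{eq:defect-footprint-excess} the defect is monotone along prefixes. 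It therefore suffices to find a prefix $\mathcal S'$ with $\delta(S')\ge\tau$, or with $\delta(S')\ge\tau-1$ plus one extra unit of excess collected later.

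We choose $\mathcal S'$ to be the shortest legal prefix after which some column is completely dominated. After a cyclic relabelling of the columns this column is column $0$. Every other column still contains an undominated vertex, since column $0$ is the first to be completed. We then put $q=a$ and define $t_j$ and $u_j$ columnwise from the numbers of undominated and unselected vertices in column $j$. The normalization must give $t_0=0$ and $1\le t_j\le a$ for $j\ne0$, and it must express $\delta(S')$ as at least $\sum_j(t_j-u_j)$. The intended reading is that $t_j$ is the number of column-$j$ vertices not yet counted in the footprint of $S'$, and $u_j$ corrects for column-$j$ vertices that were selected without producing excess.

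The local constraint \eqref{eq:cyclic-column-local-bound} is then verified from the structure of $C_a\square C_b$. For $u_j\le t_{j\pm1}$, a selected vertex $v_{i,j}$ whose step creates no excess has its row neighbours $v_{i,j\pm1}$ already dominated. Pairing such vertices injectively with the corresponding positions of columns $j\pm1$ gives the bound. For $u_j\le t_j-1$ when $0<t_j<a$, we use that column $j$ is an odd cycle $C_a$. A proper nonempty set of selected vertices in one column cannot dominate exactly its own complement in a one-to-one way, because the column neighbourhoods would have to form a perfect matching structure on an odd cycle. So one unit of excess is forced whenever the column is partially but not fully treated; this is where the oddness of $a$ enters. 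Lemma~\ref{lem:cyclic-column-defect} with $q=a$ then gives $\delta(S')\ge\min\{b-1,2a\}$. If $2a<b$, or more generally $2a\le b-1$, this already equals $\tau=2a$.

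The remaining case is $b\le2a$, where we need one more unit to reach $\tau=b$. Suppose that $ab-|S|=b-1$. Then all inequalities above are equalities, and the equality clause of Lemma~\ref{lem:cyclic-column-defect} gives $t_1=t_{b-1}=1$. Each of the two columns adjacent to the completed column $0$ therefore has exactly one undominated vertex. We plan to show that the step completing column $0$ then footprints at least two new vertices, or that the two remaining singleton gaps cannot both be closed later without excess. Either conclusion adds one to the defect, giving a contradiction. We expect the main obstacle to lie in this step and in the precise choice of $t_j$ and $u_j$ that makes \eqref{eq:cyclic-column-local-bound} hold. The equality analysis will be attempted first by examining the last vertex of $\mathcal S'$. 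That vertex lies in column $0$ or in column $\pm1$, and in each position its footprint, together with the constraint $t_{\pm1}=1$, should produce the required extra unit.
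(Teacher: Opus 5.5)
Your stopping time is the decisive gap. The first prefix after which some column is completely \emph{dominated} need not have defect $\min\{b-1,2a\}$. So no choice of $t_j,u_j$ can feed it into Lemma~\ref{lem:cyclic-column-defect}. For a counterexample, take $C_3\square C_b$ with $b\ge7$ and the legal prefix $(v_{0,0},v_{1,0})$; its second term footprints $v_{0,0},v_{1,1},v_{1,b-1}$. No column is completely dominated after the first term, and column $0$ is completely dominated after both. Yet $|N(S')|-|S'|=7-2=5<6=\min\{b-1,2a\}$. The paper instead stops at the first index $m$ at which some column is completely \emph{selected}, i.e.\ contained in $S_m$. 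Unlike domination, this event need not occur in an arbitrary sequence. It is obtained by arguing by contradiction from $k\ge ab-\tau+1>(a-1)b$, so that by pigeonhole some column lies in $S_k$. This step is missing from your outline.

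Correspondingly, your reading of $t_j$ and $u_j$ is reversed. Here $t_j$ must count the \emph{unselected} vertices of column $j$ and $u_j$ the \emph{undominated} ones. Then $|N(S_m)|-|S_m|=|T|-|U|=\sum_j(t_j-u_j)$ holds exactly, with $t_0=0$ and $1\le t_j\le a$ for $j\ne0$ by the choice of $m$. Your justifications of the local bounds do not work: a zero-excess step can footprint a row neighbour, and the perfect-matching remark is not the required inequality. The correct reason is that an undominated $v_{i,j}$ has all four neighbours unselected. Hence $i\in T_{j-1}\cap T_{j+1}$ and $\{i-1,i+1\}\subseteq T_j$. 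Since $a$ is odd, the pairs $\{x,x+2\}$ form a single $a$-cycle on $\mathbb Z_a$, and a nonempty proper $T_j$ contains at most $t_j-1$ of them. Finally, your equality case is only a plan; in the correct setup it closes quickly. The last term $v_m=v_{r,0}$ lies in column $0$. A vertex $y$ that it footprints cannot lie in column $0$, because the other vertical neighbour of such a $y$ is a column-$0$ vertex already in $S_{m-1}$. Thus $y=v_{r,1}$ or $y=v_{r,b-1}$. Because $v_m$ is the only neighbour of $y$ in $S_m$, both vertical neighbours of $y$ are unselected. This gives $t_1\ge2$ or $t_{b-1}\ge2$, contrary to $t_1=t_{b-1}=1$.
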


\begin{proof}
Put $n=ab$ and suppose that a Grundy total dominating sequence
$\mathcal S=(v_1,\ldots,v_k)$ has length $k\ge n-\tau+1$.  Write
$S_r=\{v_1,\ldots,v_r\}$ and
\[
 D_r=|N(S_r)|-|S_r|.
\]
If $F_r=N(v_r)\setminus N(S_{r-1})$ is the footprint of $v_r$, then
\[
 D_r-D_{r-1}=|F_r|-1\ge0.
\]
Thus $(D_r)$ is nondecreasing.  Since $\mathcal S$ has maximum length and
$G$ has no isolated vertices, $N(S_k)=V(G)$, and hence
\begin{equation}\label{eq:terminal-cycle-defect}
 D_k=n-k\le \tau-1.
\end{equation}

Some $C_a$-column is contained in $S_k$.  Otherwise
$k\le(a-1)b=n-b$, contradicting $k\ge n-\tau+1$ when $\tau=b$; when
$\tau=2a<b$, it contradicts $n-b<n-2a+1\le k$.
Let $m$ be the first index at which a complete column appears, and
translate the second coordinate so that $v_m$ completes column $0$.
At time $m$, no other column is complete.  Put
$T=V(G)\setminus S_m$ and $U=V(G)\setminus N(S_m)$, and define
\[
 T_j=\{i:v_{i,j}\in T\},\qquad
 U_j=\{i:v_{i,j}\in U\},\qquad
 t_j=|T_j|,\qquad u_j=|U_j|.
\]
Thus $t_0=0$ and $1\le t_j\le a$ for $j\ne0$.  A vertex counted by
$u_j$ must have its two horizontal neighbors in the sets counted by
$t_{j-1}$ and $t_{j+1}$.  Its two vertical neighbors must also lie in
$T_j$.  Since $a$ is odd, the map $i\mapsto i+2$ is a single cycle on
$\mathbb Z_a$; hence a nonempty proper $T_j$ contains at most
$t_j-1$ such vertical neighbor-pairs.  Therefore
\[
 u_j\le
 \min\{t_{j-1},t_{j+1},t_j-\mathbf1_{\{0<t_j<a\}}\}.
\]
Since
\[
 D_m=|T|-|U|=\sum_j(t_j-u_j),
\]
Lemma~\ref{lem:cyclic-column-defect} with $q=a$ gives
\[
 D_m\ge\min\{b-1,2a\}.
\]

If $b>2a$, then $D_m\ge2a=\tau$, contradicting
\eqref{eq:terminal-cycle-defect}.  Hence $b\le2a$, so $\tau=b$ and
\[
 b-1\le D_m\le D_k\le b-1.
\]
Equality holds throughout.  The equality clause of
Lemma~\ref{lem:cyclic-column-defect} gives
\begin{equation}\label{eq:adjacent-singleton-holes}
 t_1=t_{b-1}=1.
\end{equation}

Write $v_m=v_{r,0}$ and choose $y\in F_m$.  Then $v_m$ is the unique
neighbor of $y$ in $S_m$.  The witness $y$ cannot lie in column $0$,
because its other $C_a$-neighbor already belongs to $S_{m-1}$.
Thus $y=v_{r,1}$ or $y=v_{r,b-1}$.  In the first case, both
$v_{r-1,1}$ and $v_{r+1,1}$ lie outside $S_m$, so $t_1\ge2$; the second
case similarly gives $t_{b-1}\ge2$.  Both contradict
\eqref{eq:adjacent-singleton-holes}.  Hence $k\le n-\tau$, and
\eqref{eq:grundy-skew-identity} gives the stated lower bound on $Z_-$.
\end{proof}

\subsection{Even tori}

For a bipartite graph $H$ with parts $X$ and $Y$, let
$\gamma_{\mathrm{gr}}^t(H,X)$ denote the maximum length of a legal
open-neighborhood sequence in $H$ whose terms all belong to $X$.  Lin
proved that
\begin{equation}\label{eq:lin-bipartite-one-sided}
 \gamma_{\mathrm{gr}}^t(H)
 =2\gamma_{\mathrm{gr}}^t(H,X)
 =2\gamma_{\mathrm{gr}}^t(H,Y)
\end{equation}
for every bipartite graph $H$ with no isolated vertices
\cite[Proposition~4.2]{LinZeroForcingGrundy}.  We apply this identity to
even tori and bound a sequence supported on one bipartition class by the
cyclic column-defect lemma.

\begin{lemma}\label{lem:one-sided-even-torus-bound}
Let $4\le a\le b$ be even, let $G=C_a\square C_b$, and let
\[
 A_0=\{(i,j):i+j\equiv0\pmod2\},\qquad
 A_1=V(G)\setminus A_0.
\]
If $\mathcal X$ is a legal open-neighborhood sequence whose terms all
belong to one fixed class $A_\varepsilon$, then
\[
 |\mathcal X|\le\frac{ab}{2}-a.
\]
\end{lemma}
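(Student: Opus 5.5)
We follow the proof of Proposition~\ref{prop:odd-shorter-cycle-lower-bound}, with full $C_a$-columns replaced by half-columns of the fixed class $A_\varepsilon$, and apply Lemma~\ref{lem:cyclic-column-defect} with $q=a/2$.

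\textbf{Setup.} Let $\mathcal X=(v_1,\ldots,v_k)$ be legal with all terms in $A_\varepsilon$. Every neighbor of a vertex of $A_\varepsilon$ lies in $A_{1-\varepsilon}$, so $N(S_r)\subseteq A_{1-\varepsilon}$ and $|A_{1-\varepsilon}|=ab/2$. Put $D_r=|N(S_r)|-|S_r|$. As in the proposition, $D_r-D_{r-1}=|F_r|-1\ge0$, so $(D_r)$ is nondecreasing. Moreover
\[
k=|N(S_k)|-D_k\le \frac{ab}{2}-D_k .
\]
It therefore suffices to find an index $m$ with $D_m\ge a$.

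\textbf{Reduction to a complete half-column.} For each column $j$, let $H_j$ be the set of $a/2$ vertices of $A_\varepsilon$ in column $j$. If no $H_j$ is ever contained in $S_k$, then
\[
k\le\Bigl(\frac a2-1\Bigr)b=\frac{ab}{2}-b\le\frac{ab}{2}-a,
\]
and we are done. Otherwise, let $m$ be the first index at which some $H_j\subseteq S_m$, and translate so that this column is $j=0$.

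Define $t_j=|H_j\setminus S_m|$, and let $u_j$ be the number of vertices of $A_{1-\varepsilon}$ in column $j$ not in $N(S_m)$. Then $t_0=0$, $1\le t_j\le a/2$ for $j\ne0$, and $D_m=\sum_j(t_j-u_j)$.

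\textbf{Local bound.} We verify \eqref{eq:cyclic-column-local-bound} with $q=a/2$. An undominated vertex $(i,j)\in A_{1-\varepsilon}$ has horizontal neighbors $(i,j\pm1)$, which lie in $H_{j\pm1}\setminus S_m$. These maps are injective, so $u_j\le t_{j-1}$ and $u_j\le t_{j+1}$.

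Its vertical neighbors $(i\pm1,j)$ both lie in $H_j\setminus S_m$. The rows of $H_j$ form a cycle of length $a/2$ under $i\mapsto i+2$; for $a=4$ this is a $2$-cycle with a doubled edge. Each undominated vertex corresponds to a distinct edge of this cycle with both ends unselected. A nonempty proper vertex subset of a cycle spans at most $t_j-1$ edges, which gives $u_j\le t_j-\mathbf 1_{\{0<t_j<a/2\}}$.

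Lemma~\ref{lem:cyclic-column-defect} now gives $D_m\ge\min\{b-1,a\}$. If $b>a$, then $b-1\ge a$ and we are done.

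\textbf{The square case $b=a$ (main obstacle).} We must exclude $D_m=a-1$. Suppose $D_m=a-1$. Since $D_m\le D_k$, any larger value of $D_m$ would already finish, so we may assume equality. The equality clause of Lemma~\ref{lem:cyclic-column-defect} then gives $t_1=t_{b-1}=1$.

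Write $v_m=(r,0)$ and take a witness $y\in F_m$.

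\emph{Case $y$ in column $0$.} Then $y=(r\pm1,0)$. Its other vertical neighbor $(r\pm2,0)$ is a different vertex of $H_0$, since $a\ge4$. That vertex was selected before step $m$, so $y$ was already dominated, a contradiction.

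\emph{Case $y$ horizontal.} Then $y=(r,\pm1)$. Its two vertical neighbors $(r\pm1,\pm1)$ lie in $H_{\pm1}$ and are distinct, again because $a\ge4$. Since $y$ was undominated before step $m$, both are unselected, so $t_{\pm1}\ge2$. This contradicts $t_1=t_{b-1}=1$.

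Hence $D_m\ge a$, and therefore $|\mathcal X|\le ab/2-a$.
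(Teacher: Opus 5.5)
Your proof is correct and follows the paper's argument essentially step for step. It uses the same reduction to the first completed half-column, the same local bound with $q=a/2$ (including the doubled edge when $a=4$), and the same witness argument excluding defect $a-1$ in the square case $b=a$; the only cosmetic difference is that you work with a general class $A_\varepsilon$ and a column translation, whereas the paper fixes $A_0$ and uses the class-preserving automorphism $(i,j)\mapsto(i-j_*,j-j_*)$.
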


\begin{proof}
It is enough to treat $A_0$.  For $j\in\mathbb Z_b$, define the
$A_0$-half-column
\[
 H_j=A_0\cap(\mathbb Z_a\times\{j\}).
\]
Put $q=a/2$, so $|H_j|=q$.  Let $X$ be the set of terms of
$\mathcal X$.  If $X$ contains no complete
$A_0$-half-column, then
\[
 |X|\le b(q-1)
 =\frac{ab}{2}-b
 \le\frac{ab}{2}-a.
\]

Suppose now that a half-column is completed, and let
$\mathcal X_m=(x_1,\ldots,x_m)$ be the first prefix for which this
happens.  Let $j_*$ be the index of the completed half-column.  Apply the
class-preserving automorphism
\[
 (i,j)\longmapsto(i-j_*,j-j_*)
\]
so that $x_m$ completes $H_0$, and put
$X_m=\{x_1,\ldots,x_m\}$.  In the source class $A_0$ and target
class $A_1$, respectively, set
\[
 T=A_0\setminus X_m,\qquad U=A_1\setminus N_G(X_m),
\]
and define
\[
 t_j=|T\cap H_j|,\qquad
 u_j=|U\cap(\mathbb Z_a\times\{j\})|.
\]
Then $t_0=0$ and $1\le t_j\le q$ for $j\ne0$.  Every vertex counted by
$u_j$ has its two horizontal neighbors in the corresponding sets counted
by $t_{j-1}$ and $t_{j+1}$.  In the cyclic order inherited from $C_a$,
the $q$ target vertices in column $j$ determine $q$ consecutive pairs of
vertices of $H_j$; when $q=2$, the same two-element pair occurs twice.
A nonempty proper subset of $H_j$ contains at most one fewer such pair
than vertices, and for $q=2$ it contains none.  Consequently,
\[
 u_j\le
 \min\{t_{j-1},t_{j+1},t_j-\mathbf1_{\{0<t_j<q\}}\}.
\]
Moreover,
\begin{equation}\label{eq:one-sided-half-column-defect}
 |N_G(X_m)|-|X_m|=|T|-|U|=\sum_j(t_j-u_j).
\end{equation}
Lemma~\ref{lem:cyclic-column-defect} gives a lower bound
$\min\{b-1,a\}$ for \eqref{eq:one-sided-half-column-defect}.  If $b>a$,
then $a$ and $b$ are even, so $b\ge a+2$ and this minimum is $a$.

It remains to exclude the value $a-1$ when $b=a$.  If equality held,
the equality clause of Lemma~\ref{lem:cyclic-column-defect} would give
$t_1=t_{b-1}=1$.  Choose a vertex $y$ newly footprinted by the last term
$x_m$.  Thus $x_m$ is the unique neighbor of $y$ in $X_m$.  If $y$ lies
in column $0$, its other vertical neighbor belongs to the completed
half-column $H_0$, a contradiction.  If $y$ lies in column $1$ or
$b-1$, both of its vertical neighbors are outside $X_m$, forcing
$t_1\ge2$ or $t_{b-1}\ge2$, again a contradiction.  Hence
\[
 |N_G(X_m)|-|X_m|\ge a
\]
also when $b=a$.

The defect is nondecreasing along a legal sequence by
\eqref{eq:defect-footprint-excess}.  Therefore
$|N_G(X)|-|X|\ge a$.  Since $N_G(X)\subseteq A_1$ and
$|A_1|=ab/2$, it follows that $|X|\le ab/2-a$.
\end{proof}

\begin{proposition}\label{prop:even-even-cycle-lower-bound}
Let $4\le a\le b$ be even.  Then
\[
 \gamma_{\mathrm{gr}}^t(C_a\square C_b)\le ab-2a,
 \qquad
 Z_-(C_a\square C_b)\ge2a.
\]
\end{proposition}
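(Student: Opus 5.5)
The plan is a direct combination of Lin's one-sided identity \eqref{eq:lin-bipartite-one-sided} with Lemma~\ref{lem:one-sided-even-torus-bound}, followed by \eqref{eq:grundy-skew-identity}.

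First, observe that since $a$ and $b$ are both even, $G=C_a\square C_b$ is bipartite. Its parts are the parity classes $A_0$ and $A_1$ of Lemma~\ref{lem:one-sided-even-torus-bound}. Indeed, every edge changes exactly one coordinate by $\pm1$. The wrap-around edges join $a-1$ to $0$ (respectively $b-1$ to $0$), and these also change parity because $a$ and $b$ are even. The graph is $4$-regular, so it has no isolated vertices. Hence \eqref{eq:lin-bipartite-one-sided} applies with $X=A_0$ and gives
\[
 \gamma_{\mathrm{gr}}^t(G)=2\gamma_{\mathrm{gr}}^t(G,A_0).
\]

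Second, $\gamma_{\mathrm{gr}}^t(G,A_0)$ is by definition the maximum length of a legal open-neighborhood sequence whose terms all lie in $A_0$. Lemma~\ref{lem:one-sided-even-torus-bound} bounds every such sequence by $ab/2-a$. Therefore
\[
 \gamma_{\mathrm{gr}}^t(G)\le 2\left(\frac{ab}{2}-a\right)=ab-2a.
\]
Finally, \eqref{eq:grundy-skew-identity} converts this into $Z_-(G)=ab-\gamma_{\mathrm{gr}}^t(G)\ge 2a$.

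There is no substantial obstacle, since the real work sits in Lemma~\ref{lem:one-sided-even-torus-bound}. The only points to check are the hypotheses of Lin's identity: bipartiteness, which requires both cycle orders to be even, and the absence of isolated vertices. One should also confirm that the one-sided maximum in \eqref{eq:lin-bipartite-one-sided} is exactly the quantity the lemma controls, namely sequences supported on a single fixed class.
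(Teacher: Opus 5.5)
Your proposal is correct and matches the paper's proof: Lin's one-sided identity \eqref{eq:lin-bipartite-one-sided} gives $\gamma_{\mathrm{gr}}^t(G)=2\gamma_{\mathrm{gr}}^t(G,A_0)$, Lemma~\ref{lem:one-sided-even-torus-bound} bounds the one-sided quantity by $ab/2-a$, and \eqref{eq:grundy-skew-identity} converts the resulting bound into $Z_-(G)\ge2a$. Your explicit checks of bipartiteness, including the parity change across the wrap-around edges, and of the absence of isolated vertices supply the hypotheses that the paper leaves implicit.
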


\begin{proof}
Let $G=C_a\square C_b$, with bipartition classes $A_0$ and $A_1$ as
above.  Lemma~\ref{lem:one-sided-even-torus-bound} and
\eqref{eq:lin-bipartite-one-sided} give
\[
 \gamma_{\mathrm{gr}}^t(G)
 =2\gamma_{\mathrm{gr}}^t(G,A_0)
 \le2\left(\frac{ab}{2}-a\right)
 =ab-2a.
\]
Equation~\eqref{eq:grundy-skew-identity} now gives $Z_-(G)\ge2a$.
\end{proof}

\subsection{Complete classification}

\begin{proof}[Proof of Theorem~\ref{thm:cc-exact}]
Suppose first that $a=b$.  For odd $a$, Bre\v{s}ar et
al.~\cite{BresarEtAlProductGraphs} proved
$\gamma_{\mathrm{gr}}^t(C_a\square C_a)=(a-1)^2$, which is equivalent
to $Z_-(C_a\square C_a)=2a-1$.  For even $a$,
Proposition~\ref{prop:even-even-cycle-lower-bound} and the upper bound in
\eqref{eq:known-cc-interval} give $Z_-=2a$.

Now let $a<b$.  If $a$ is odd,
Corollary~\ref{cor:cycle-forcing-upper-bounds} and
Proposition~\ref{prop:odd-shorter-cycle-lower-bound} give
$Z_-=\min\{b,2a\}$.  If $a$ is even and $b$ is odd, the same corollary
gives $Z_-\le a$, while \eqref{eq:known-cc-interval} gives the reverse
inequality.  Finally, if $a$ and $b$ are even,
Proposition~\ref{prop:even-even-cycle-lower-bound} and
\eqref{eq:known-cc-interval} give $Z_-=2a$.
These cases exhaust $3\le a\le b$, and
\eqref{eq:grundy-skew-identity} gives the corresponding values of
$\gamma_{\mathrm{gr}}^t$.
\end{proof}

%% file: 07_conclusion.tex
\section{Concluding remarks}\label{sec:conclusion}

Theorems~\ref{thm:path-product}, \ref{thm:pc-exact-skew-zero-forcing},
and~\ref{thm:cc-exact} give the Grundy total domination number and the
skew zero forcing number for every Cartesian product with two path or
cycle factors.  Through
$\gamma_{\mathrm{gr}}^t(G)=|V(G)|-Z_-(G)$, forcing sets provide one
direction of each formula, while matrix nullity or direct bounds on legal
open-neighborhood sequences provide the other.

The connection established in earlier work between greedy neighborhood
sequences, zero forcing, and minimum-rank problems is essential to the
classification.  It turns questions that initially appear to belong to
different areas into mutually useful formulations: forcing dynamics
suggest candidate extremal sets, matrix spectra certify large nullspaces,
and footprint counting controls the longest legal sequences.  The present
results illustrate that the connection is most effective when it permits
techniques from these areas to be combined, rather than when one
formulation is used exclusively.

For path--path and path--cycle products, Kronecker differences attain the
forcing values and also determine maximum skew nullity and minimum skew
rank.  Their only strict skew-nullity gap occurs when $p$ is even, $c$ is
odd, and $p\ge c$:
\[
 M_-(P_p\square C_c)=c-1
 <c=Z_-(P_p\square C_c)=M_0(P_p\square C_c).
\]
Nullity parity prevents a skew-symmetric certificate from attaining this
odd forcing value, while a hollow symmetric certificate does attain it.
Thus the linear-algebraic part proves more than the corresponding defect
bound would provide: throughout the path-containing families it identifies
the extremal matrix nullity and hence the minimum skew rank.  At the same
time, the strict gap shows exactly why skew-symmetric spectral certificates
alone cannot establish the entire forcing classification.

The previously unresolved odd-shorter and even-even lower bounds are
obtained from a common column-defect argument.  For an odd shorter cycle,
the first completed column is charged with $q=a$; the equality case
supplies the final unit in the short range.  For two even cycles, Lin's
one-sided identity reduces the problem to a single bipartition class, and
the same cyclic height estimate is applied with $q=a/2$ to the first
completed half-column in that class.  The resulting loss is at least $a$
per class and hence $2a$ in total.  This columnwise refinement of earlier
footprint and arc counting works uniformly for all admissible cycle orders
in these branches.  It is not, however, a universal replacement for the
matrix method: its sharpness depends on finding a completed column or
half-column and on the parity structure that controls the corresponding
cyclic height sequence.  Recognizing such a layer is itself the difficult
step, and no comparable defect profile is presently available that would
recover the matrix-nullity conclusions for all path-containing products.

The matching upper bounds are likewise structural rather than automatic.
A full boundary layer gives some elementary sweeps, but the sharp
path--cycle construction for an odd cycle begins from a sparse step-two
pattern and must close correctly at the cyclic seam.  The closed-diagonal
and alternating half-layer constructions on tori similarly use the parity
and periodicity of the factors.  These examples show that even after a
lower-bound mechanism is known, finding a minimum skew zero forcing set
can require a separate geometric idea.

The formulas also show that $Z_-$ is not monotone as a function of a
factor order.  For example,
\[
 Z_-(C_4\square C_5)=Z_-(C_4\square C_7)=4,
 \qquad Z_-(C_4\square C_6)=8,
\]
and
\[
 Z_-(P_3\square C_5)=Z_-(P_3\square C_7)=3,
 \qquad Z_-(P_3\square C_6)=6,
\]
whereas $Z_-(P_3\square P_3)=3>2=Z_-(P_3\square P_4)$.  These jumps
reflect the parity dependence of both the forcing constructions and the
spectral certificates.

This complementarity also explains why the exact values remained open
after the earlier bounds.  In some regimes the decisive obstruction is
spectral, in others it is encoded by the first completed periodic layer,
and the sharp forcing set may use a third feature of the product geometry.
Bounds obtained from any one of these viewpoints need not meet.  The full
classification becomes possible only after the appropriate certificate,
defect calculation, and forcing construction are matched in each parity
regime.

%% file: 08_declarations.tex
\section*{Funding}

This work was supported by the \ManuscriptFundingAgency\ under Grant
\ManuscriptGrantNumber.

\section*{Data availability}

No datasets were generated or analyzed in this work.  Auxiliary
computational checks were used only as diagnostic support and are not
required for any proof.

\section*{Declaration of competing interest}

The author declares that there are no competing interests.

\section*{Declaration of generative AI and AI-assisted technologies in the
manuscript preparation process}

During the preparation of this work, the author used ChatGPT and Codex
(OpenAI) and Claude Opus 5 (Anthropic) as auxiliary tools.  Specifically,
these tools assisted with exploring candidate arguments, testing small
instances, simulating proposed skew-forcing sequences, checking matrix and
defect calculations, identifying gaps and boundary cases in draft proofs,
organizing the exposition, checking relevant references and journal
requirements, and editing English and LaTeX.  The research questions,
conjectures, core constructions, and central mathematical ideas originated
with the author, who directed the investigations, selected the arguments
included in the manuscript, and verified the final proofs.  Computational
checks were used only as diagnostic support and do not replace any proof.
All AI-assisted output was critically reviewed, revised, or discarded by
the author, who takes full responsibility for the originality, correctness,
and content of the article.